\numberwithin{equation}{section}
\def\ga{\mathfrak{a}}
\def\gk{\mathfrak{k}}
\def\C{\mathbb{C}}
\def\F{\mathbb{F}}
\def\H{\mathbb{H}}
\def\N{\mathbb{N}}
\def\R{\mathbb{R}}
\def\Z{\mathbb{Z}}
\def\cH{\mathcal{H}}
\def\Re{{\rm Re}\,}
\def\rank{{\rm rank}\,}
\theoremstyle{plain}
\newtheorem{theorem}{Theorem}
\newtheorem{lemma}[theorem]{Lemma}
\newtheorem{remark}[theorem]{Remark}
\newcommand{\fa}{\mathfrak{a}}
\newcommand{\fg}{\mathfrak{g}}
\newcommand{\fk}{\mathfrak{k}}
\newcommand{\fn}{\mathfrak{n}}
\newcommand{\fs}{\mathfrak{s}}
\newcommand{\bc}{\mathbf{c}}
\newcommand{\rS}{\mathrm{S}}
\newcommand{\pr}{\mathrm{proj}}
\newcommand{\SO}{\mathrm{SO}}
\newcommand{\Sp}{\mathrm{Sp}}
\newcommand{\Spin}{\mathrm{Spin}}
\newcommand{\SU}{\mathrm{SU}}
\newcommand{\U}{\mathrm{U}}
\newcommand{\diag}{\mathrm{diag}}
\newcommand{\gL}{\Lambda }
\newcommand{\bmid}{\,\, \right|\,\,}
\def\mfg{\mathfrak{g}}
\begin{document}
\title{Direct Systems of Spherical Functions and Representations}

\author{Matthew Dawson}
\address{Department of Mathematics, Louisiana State University, Baton Rouge, LA 70803, U.S.A.}
\email{mdawso5@math.lsu.edu}
\thanks{The research of M. Dawson was partially supported by DMS-0801010 and DMS-1101337}
\author{Gestur \'{O}lafsson}
\address{Department of Mathematics, Louisiana State University, Baton Rouge,
LA 70803, U.S.A.}
\email{olafsson@math.lsu.edu}
\thanks{The research of G. \'Olafsson was supported by NSF grants DMS-0801010 and DMS-1101337, and the Max Planck Institute, Bonn, Germany}

\author{Joseph A. Wolf}
\address{Department of Mathematics, University of California, Berkeley,
CA 94720-3840, U.S.A.}
\email{jawolf@math.berkeley.edu}
\thanks{The research of J. A. Wolf was partially supported by a grant from the
Simons Foundation}

\subjclass[2000]{43A85, 53C35, 22E46}
\keywords{Injective limits; Compact symmetric spaces;
Spherical representations; Spherical functions}

\begin{abstract}
Spherical representations and functions are the building blocks
for harmonic analysis on riemannian symmetric spaces.  Here we consider
spherical functions and spherical
representations related to certain infinite dimensional symmetric
spaces $G_\infty/K_\infty = \varinjlim G_n/K_n$. We use the representation
theoretic construction $\varphi (x) = \langle e, \pi(x)e\rangle$ where
$e$ is a $K_\infty$--fixed unit vector for $\pi$.  Specifically, we look
at representations $\pi_\infty = \varinjlim \pi_n$ of $G_\infty$ where
$\pi_n$ is $K_n$--spherical, so the spherical representations
$\pi_n$ and the corresponding spherical functions $\varphi_n$ are related by
$\varphi_n(x) = \langle e_n, \pi_n(x)e_n\rangle$ where $e_n$ is a
$K_n$--fixed unit vector for $\pi_n$, and we consider the possibility of
constructing a $K_\infty$--spherical function $\varphi_\infty = \lim \varphi_n$.
We settle that matter by proving the equivalence of
(i)  $\{e_n\}$ converges to a nonzero $K_\infty$--fixed vector $e$, and
(ii) $G_\infty/K_\infty$ has finite symmetric space rank  (equivalently,
it is the
Grassmann manifold of $p$--planes in $\F^\infty$ where $p < \infty$ and
$\F$ is $\R$, $\C$ or $\H$).   In that finite rank case we also prove the
functional equation
\[\varphi(x)\varphi(y) =
\lim_{n\to \infty} \int_{K_n}\varphi(xky)dk\]
of
Faraut and Olshanskii, which is their definition of spherical functions.
We use this, and recent
results of M. R\"osler, T. Koornwinder and M. Voit, to show that in the case of finite rank all $K_\infty$-spherical representations of $G_\infty$ are given by the above limit formula. This in
particular shows that the characterization of the spherical representations in terms of highest weights
is still valid as in the finite dimensional case.
\end{abstract}

\maketitle

\section{Introduction}
\setcounter{equation}{0}
\setcounter{theorem}{0}
\noindent
Representation theory and harmonic analysis on symmetric spaces is by
now well understood. The building blocks are the spherical representations
and the corresponding spherical functions. For the case of a compact
symmetric space $G/K$ the spherical representations are parameterized
by a certain semi-lattice $\Lambda$.  When $G/K$ is simply connected,
$\Lambda$ is described by the Cartan-Helgason theorem. For each
$\mu\in\Lambda$ let $(\pi_\mu, V_\mu)$ denote the corresponding irreducible
representation. Then the space $V_\lambda^K$ of $K$-fixed vectors has
dimension $1$. Let $e_\mu$ be a unit vector in $V_\mu^K$. The function
\begin{equation}\label{eq-sphericalIntro1}
\psi_\mu (x)=\langle e_\mu ,\pi_\mu (x)e_\mu \rangle
\end{equation}
does not depend on the choice of $e_\mu$\,,  and
$\{\sqrt{\dim V_\mu}\psi_\mu\}_{\mu\in\Lambda }$ is an orthonormal basis
for $L^2(G/K)^K$.  In particular
\[f=\sum_{\mu\in\Lambda} \dim V_\mu \langle f,\psi_\mu \rangle \psi_\mu\]
for every $f\in L^2(G/K)^K$. Here the sum is taken in the $L^2$-sense.
Similarly, every $f\in L^2(G/K)$ can be written as
\[f=\sum_{\mu\in \Lambda }
   \dim V_\mu \langle\pi_\mu (f)e_\mu ,  \pi_\mu (\, \cdot \, ) e_\mu \rangle\]
where $\pi_\mu (f)=\int_G f(gK)\pi_\mu (g) \, d(gK)$.
\medskip

The function $\psi_\mu$ is spherical in the sense that
\begin{equation}\label{eq-sphericalIntro}
\int_K \psi_\mu (xky)\, dk=\psi_\mu (x)\psi_\mu (y)\quad
	\text{ for all } x,y\in G\,.
\end{equation}
Here $dk$ is normalized Haar measure on the compact group $K$.
Every positive definite spherical function on $G$ is obtained in this way
from an irreducible unitary representation of $G$.
\medskip

It is natural to extend the study to infinite dimensional Lie groups and
symmetric spaces. The simplest case is $G_\infty =\varinjlim G_n$,
$K_\infty=\varinjlim K_n$ and $M_\infty =\varinjlim G_n/K_n$ where
$G_n\subseteq G_{n+1}$ is a sequence of compact Lie groups such that
$K_n = G_n \cap K_{n+1}$.  The basic theory was developed by G. Olshanskii
(see \cite{Ol1990}) for the classical direct limits and for a very important
class of representations; by L. Natarajan,
E. Rodr\i guez--Carrington and one of us for more general direct limits
(see \cite{NRW1}, \cite{NRW2} and \cite{NRW3}); and by
S. Str\u atil\u a \& D. Voiculescu (see their survey \cite{SV}) for the
factor representation viewpoint.  See J. Faraut
\cite{F2008} for further information and references.
\medskip

The equation (\ref{eq-sphericalIntro}) does not make sense here because
there is no invariant measure on $K_\infty$. The replacement is the
functional equation
\begin{equation}\label{eq-sphericalIntro2}
\lim_{n\to \infty}\int_{K_n} \psi (xk_n y)\, dk_n=\psi (x)\psi (y)  \quad \text{ for all } x,y\in G_\infty\, .
\end{equation}
Again, see \cite{F2008}, the function $\psi$ is spherical if and only if
there is an irreducible unitary representation $(\pi, V)$ of $G_\infty$
and $e\in V^{K_\infty}$ with $\|e\|=1$, such that $\psi$ is given by
(\ref{eq-sphericalIntro1}).
\medskip

On the other hand, limits of irreducible spherical representations for a
strict direct system $\{M_n=G_n/K_n\}$ of compact symmetric spaces
were studied by the last named author in a series of articles
\cite{W2011}, \cite{W2009}, and \cite{W2010}, and then later by the last
two authors in \cite{OW2009} and \cite{OW2010}.  In particular, in
\cite{OW2009}, \cite{OW2010} and \cite{OW2011} they introduced the notion
of propagation of symmetric spaces. In short, if the $G_n$ are compact
and connected, and $\pi_{n}$ is a spherical representation of $G_n$,
then there exists in a canonical way a spherical representation $\pi_{{n+1}}$
of $G_{n+1}$ such that $\pi_{n}$ is a subrepresentation of
$\pi_{n+1}|_{G_n}$ with
multiplicity $1$. Furthermore, if $u_{n+1}$ is a highest weight vector for
$\pi_{{n+1}}$ then $\pi_{n}$ is realized as $\pi_{n+1}|_{G_n}$ acting on
the space generated by $\pi_{{n+1}}(G_n)u_{n+1}$. The system
$\{(\pi_{n}, V_n)\}$
is injective and $(\pi_\infty, V_\infty):= \varinjlim (\pi_n,V_n)$ is an
irreducible unitary representation of $G_\infty$.
\medskip

In this article we address the question of whether the representation
$(\pi_\infty ,V_\infty)$ is spherical. Our main result is Theorem
\ref{maintheorem} below.  It states that
 $V_\infty^{K_\infty}\not=\{0\}$ if and only if the symmetric space ranks of
the compact riemannian symmetric spaces $M_n$ are bounded.
Thus $V_\infty^{K_\infty}\not=\{0\}$ only for the symmetric spaces
$\SO(p+\infty)/\SO(p)\times \SO(\infty)$,
$\SU(p+\infty)/\mathrm{S}(\U(p)\times \U(\infty))$ and
$\Sp(p+\infty)/\Sp(p)\times \Sp(\infty)$ where $0 < p < \infty$.
We then show that if $\{e_n\}$ is a sequence
of $K_n$-invariant vectors in $V_n$ of norm $1$ and
$e=\lim_{n\to \infty} e_n\in V_\infty^{K_\infty}$, then the function
$\psi_\infty (x):=\langle e , \pi_\infty (x)e\rangle$ is spherical in
the sense of (\ref{eq-sphericalIntro2}), and
\[\psi_\infty (x)=\lim_{n\to \infty} \psi_n (x)\]
where $\psi_n (x)=\langle e_n, \pi_n (x)e_n\rangle$.  See Theorem
\ref{mainTheorem2}.

Further discussion of the finite rank case is given in Section \ref{sec7}.
Using result from \cite{RKV} we show that in the case of
finite rank all $K_\infty$-spherical representations of $G_\infty$ are
given by the limit construction in Section \ref{sec3}.  This in
particular shows that the characterization of the spherical representations
in terms of highest weights
is still valid in the finite rank case.

\section{Propagation of Symmetric Spaces}\label{sec1}
\setcounter{equation}{0}
\setcounter{theorem}{0}

\noindent
In this section we give a short overview of injective limits and propagation
of compact symmetric spaces, as needed for our considerations on
limits of spherical representations. We refer
to \cite{OW2010} and \cite{W2010} for details.
\medskip

Let $M=G/K$ be a riemannian symmetric space of  compact
type. Thus $G$ is a connected semisimple compact Lie group with an involution
$\theta$ such that
\[(G^\theta)_o\subseteqq K\subseteqq G^\theta\]
where $G^\theta =\{x\in G\mid \theta (x)=x\}$ and the subscript ${}_o$ denotes
the connected component containing the identity element. For simplicity
we assume that $M$ is simply connected.
\medskip

Denote the Lie algebra of $G$ by $\fg$. By abuse of notation we write
$\theta$ for the involution $d\theta: \fg\to \fg$.  As usual
$\fg=\fk\oplus \fs$
where $\fk=\{X\in\fg\mid \theta(X)=X\}$ is the Lie algebra of $K$ and
$\fs=\{X\in \fg\mid \theta (X)=-X\}$.
Fix a maximal abelian subspace $\fa\subset \fs$.
For $\alpha \in\fa^*_{_\C}$ let
\[
\fg_{_\C,\alpha} =\{X\in\fg_{_\C} \mid [H,X]=\alpha (H)X \text{ for all }
H\in \fa_{_\C}\}\, .
\]
If $\fg_{_\C,\alpha}\not=\{0\}$ then $\alpha$ is called a (restricted) root.
Denote by $\Sigma=\Sigma (\fg,\fa)\subset i\fa^*$ the set of roots.  Let
$\Sigma_0=\Sigma_0(\fg,\fa)=\{\alpha \in\Sigma\mid 2\alpha\not\in\Sigma\}$,
the set of nonmultipliable roots.
Then $\Sigma_0$ is a root system in the usual sense and the Weyl
group corresponding to $\Sigma (\fg, \fa)$ is the same as the Weyl group
generated by the reflections $s_\alpha$, $\alpha \in \Sigma_{0}$.
Furthermore, $M$ is irreducible as a riemannian symmetric space if and only
if $\Sigma_{0}$ is irreducible as a root system.
\medskip

Let $\Sigma^+\subset \Sigma$ be a positive system and
$\Sigma^+_{0}=\Sigma^+ \cap \Sigma_{0}$. Then
$\Sigma^+_{0}$ is a positive system in
$\Sigma_{0} $. Denote
by $\Psi =\{\alpha_1,\ldots ,\alpha_r\}$, $r=\dim \fa$, the set of simple roots in $\Sigma_{0}^+$. Since we will be dealing with direct limits we may
assume that
$\Sigma$, and hence $\Sigma_0$, is one of the classical root systems.
In order to facilitate considerations of direct limits,
we number the simple roots in the following way:

\begin{equation}\label{rootorder}
\begin{aligned}
&\begin{tabular}{|c|l|c|}\hline
$\Psi=A_r$&
\setlength{\unitlength}{.5 mm}
\begin{picture}(155,18)
\put(5,2){\circle{4}}
\put(2,5){$\alpha_{r}$}
\put(6,2){\line(1,0){13}}
\put(24,2){\circle*{1}}
\put(27,2){\circle*{1}}
\put(30,2){\circle*{1}}
\put(34,2){\line(1,0){13}}
\put(48,2){\circle{4}}
\put(49,2){\line(1,0){23}}
\put(73,2){\circle{4}}
\put(74,2){\line(1,0){23}}
\put(98,2){\circle{4}}
\put(99,2){\line(1,0){13}}
\put(117,2){\circle*{1}}
\put(120,2){\circle*{1}}
\put(123,2){\circle*{1}}
\put(129,2){\line(1,0){13}}
\put(143,2){\circle{4}}
\put(140,5){$\alpha_1$}
\end{picture}
&$r\geqq 1$
\\
\hline
$\Psi =B_r$&
\setlength{\unitlength}{.5 mm}
\begin{picture}(155,18)
\put(5,2){\circle{4}}
\put(2,5){$\alpha_{r}$}
\put(6,2){\line(1,0){13}}
\put(24,2){\circle*{1}}
\put(27,2){\circle*{1}}
\put(30,2){\circle*{1}}
\put(34,2){\line(1,0){13}}
\put(48,2){\circle{4}}
\put(49,2){\line(1,0){23}}
\put(73,2){\circle{4}}
\put(74,2){\line(1,0){13}}
\put(93,2){\circle*{1}}
\put(96,2){\circle*{1}}
\put(99,2){\circle*{1}}
\put(104,2){\line(1,0){13}}
\put(118,2){\circle{4}}
\put(115,5){$\alpha_2$}
\put(119,2.5){\line(1,0){23}}
\put(119,1.5){\line(1,0){23}}
\put(143,2){\circle*{4}}
\put(140,5){$\alpha_1$}
\end{picture}
&$r\geqq 2$\\
\hline
$\Psi=C_r$ &
\setlength{\unitlength}{.5 mm}
\begin{picture}(155,18)
\put(5,2){\circle*{4}}
\put(2,5){$\alpha_{r}$}
\put(6,2){\line(1,0){13}}
\put(24,2){\circle*{1}}
\put(27,2){\circle*{1}}
\put(30,2){\circle*{1}}
\put(34,2){\line(1,0){13}}
\put(48,2){\circle*{4}}
\put(49,2){\line(1,0){23}}
\put(73,2){\circle*{4}}
\put(74,2){\line(1,0){13}}
\put(93,2){\circle*{1}}
\put(96,2){\circle*{1}}
\put(99,2){\circle*{1}}
\put(104,2){\line(1,0){13}}
\put(118,2){\circle*{4}}
\put(115,5){$\alpha_2$}
\put(119,2.5){\line(1,0){23}}
\put(119,1.5){\line(1,0){23}}
\put(143,2){\circle{4}}
\put(140,5){$\alpha_1$}
\end{picture}
& $r\geqq 3$
\\
\hline
$\Psi=D_r$ &
\setlength{\unitlength}{.5 mm}
\begin{picture}(155,20)
\put(5,9){\circle{4}}
\put(2,12){$\alpha_{r}$}
\put(6,9){\line(1,0){13}}
\put(24,9){\circle*{1}}
\put(27,9){\circle*{1}}
\put(30,9){\circle*{1}}
\put(34,9){\line(1,0){13}}
\put(48,9){\circle{4}}
\put(49,9){\line(1,0){23}}
\put(73,9){\circle{4}}
\put(74,9){\line(1,0){13}}
\put(93,9){\circle*{1}}
\put(96,9){\circle*{1}}
\put(99,9){\circle*{1}}
\put(104,9){\line(1,0){13}}
\put(118,9){\circle{4}}
\put(113,12){$\alpha_3$}
\put(119,8.5){\line(2,-1){13}}
\put(133,2){\circle{4}}
\put(136,0){$\alpha_1$}
\put(119,9.5){\line(2,1){13}}
\put(133,16){\circle{4}}
\put(136,14){$\alpha_2$}
\end{picture}
& $r\geqq 4$
\\
\hline
\end{tabular}
\end{aligned}
\end{equation}

The classical irreducible symmetric spaces are given by the following
table.  For the grassmannians we always assume that $p\leqq q$ and we
let $n=p+q$.
For $\alpha\in \Sigma$ we write $m_\alpha =\dim \fg_{_\C, \alpha}$, and
for the simple roots we write
$m_j=m_{\alpha_j}$.  For the realization of each root system see \cite{Ar1962},
\cite[Chapter 10]{He1978} or \cite{OW2009}.  In all these classical cases
$m_{\alpha_j/2} = 0$ for $j > 1$.  We will go into more detail
in Section \ref{sec2}.

{\footnotesize
\begin{equation}\label{symmetric-case-class}
\begin{tabular}{|c|l|l|c|c|c|c|} \hline
\multicolumn{7}{| c |}
{Irreducible compact riemannian symmetric $M = G/K$, $G$ classical, $K$ connected}\\
\hline \hline
\multicolumn{1}{|c}{} &  \multicolumn{1}{|c}{$G$} &
        \multicolumn{1}{|c}{$K$} &
        \multicolumn{1}{|c|}{$\Psi$}& $\begin{matrix}
                                        m_j \\ j>1
                                       \end{matrix}$& $m_1$& $m_{\alpha_1/2}$  \\ \hline \hline
$1$ &$\SU (n)\times \SU(n)$ & $\diag\, \SU(n)$ & $A_{n-1}$ &$2$&$2$&$0$ \\ \hline
$2$ &   $\begin{matrix}\Spin (2n+1)\times\\ \Spin (2n+1)\end{matrix}$ & $\diag\, \Spin (2n+1)$ &
    $B_n$&$2$&$2$&$0$  \\ \hline
$3$ & $\begin{matrix}\Spin (2n)\times\\ \Spin (2n)\end{matrix}$ & $\diag\, \Spin(2n)$ &
    $D_n$&$2$&$2$&$0$   \\ \hline
$4$ &$\Sp (n)\times \Sp (n)$ & $\diag\,\Sp (n)$ & $C_n$&$2$&$2$&$0$   \\ \hline
$5$ & $\SU(n)$ & $\mathrm{S}(\U (p)\times \U ( q))$ &
    $C_p$ & $2$&$1$& $2(q-p)$\\ \hline
$6$ & $\SU (n)$ & $\SO (n)$ & $A_{n-1}$
&$1$&$1$&$0$ \\ \hline
$7$ & $\SU (2n)$ & $\Sp (n)$ & $A_{n-1}$&$4$&$4$&$0$ \\ \hline
$8$ &$\SO (n)$ & $\SO (p) \times \SO (q)$ &
    $B_p$&$1$&$q-p$&$0$  \\ \hline
$9_1$  &$\SO (4n)$ & $\U (2n)$ & $C_{n}$ &$4$& $1$&$ 0$\\ \hline
$9_2$  &$\SO (2(2n+1))$ & $\U (2n+1)$ & $C_{n}$ &$4$& $1$&
$4$\\  \hline
$10$  &$\Sp (n)$ & $\Sp (p) \times \Sp (q)$ & $C_p$& $4$ & $3$ & $4(q-p)$\\ \hline
$11$   &$\Sp (n)$ & $\U (n)$ & $C_n$ & $1$& $0$& $0$\\ \hline
\end{tabular}
\end{equation}
}

Cases (5), (8), and (10) are the grassmannians of $p$--planes in $\F^n$,
$n=p+q$, where $\F=\C$, $\R$ or $\H$, respectively.  In cases (5) and (10),
$m_{\alpha_1/2}= (q-p)d$ and $m_{\alpha_1} = d-1$ where $d = \dim_\R \F$. It is therefore
more natural to view (8) as of type $C_p$ with $m_{\alpha_1/2}=q-p$ and $m_{\alpha_1}=d-1=0$.
\medskip

We now assume that $\{M_k=G_k/K_k\}_{k\geqq 1}$ is a sequence of compact symmetric spaces such
that $G_n\subseteqq G_k$ and $K_n = G_n\cap K_k$ for $n\leqq k$. Then
$M_n\subseteqq M_k$. We write $\Sigma_n$,
$\Sigma_n^+$, $\Sigma_{0,n}$, $\Psi_n$, etc.
when we need to indicate dependence on the symmetric
space $M_n$. We say that
$M_k$ \textit{propagates} $M_n$ if (i) $\fa_k=\fa_n$, or (ii) by choosing
$\fa_n\subseteqq \fa_k$ we obtain the Dynkin diagram in Table \ref{rootorder}
for $\Psi_k$ from that of $\Psi_n$ by only adding simple roots at the
left end.  Then in particular $\Psi_{n}$ and
$\Psi_{k}$ are of the same type.
\medskip

In \cite{OW2009}, \cite{OW2010}
and \cite{OW2011} we used the set of indivisible roots instead of the set
of nonmultipliable roots. Both definitions are equivalent.
\medskip

When $\fg_k$ propagates $\fg_n$, and $\theta_k$ and
$\theta_n$ are the corresponding involutions with
$\theta_k|_{\fg_n} = \theta_n$, the corresponding eigenspace decompositions
$\fg_k=\fk_k\oplus \fs_k$ and $\fg_n=\fk_n\oplus \fs_n$
give us
\[
\fk_n=\fk_k\cap \fg_n\, ,\quad \text{and}\quad \fs_n=\fg_n\cap \fs_k\, \quad
 \text{for}\quad k \geqq n.\]
We recursively choose maximal commutative subspaces $\fa_k\subset \fs_k$ such
that $\fa_{n} \subseteqq \fa_k$ for $k\geqq n$. We then have
$\Sigma_n\subseteqq \Sigma_{k}|_{\fa_n}\setminus \{0\}$.
We choose the positive ordering such that $\Sigma_n^+\subseteqq \Sigma_k^+|_{\fa_n}\setminus
\{0\}$.
\medskip

Note that by moving along each row in Table \ref{symmetric-case-class} we
have a propagation of symmetric spaces.  In all cases except (5), (8) and (10)
the multiplicities remain constant, in fact less or equal to $4$.
\medskip

We set
\[G_\infty =\varinjlim G_n\, \quad K_\infty =\varinjlim K_n\, ,\text{ and } M_\infty=\varinjlim M_n=G_\infty /K_\infty\, .\]
In this paper we consider the question of whether
the inductive limit of $K_n$--spherical representations of $G_n$ is
$K_\infty$--spherical.
For that we need to recall the construction of inductive limits of spherical representations, the theory of highest weights of spherical representations and the Harish--Chandra $c$--function of the noncompact dual of $G_n$.

\section{Spherical Representations of Compact Groups}\label{sec2}
\setcounter{equation}{0}
\setcounter{theorem}{0}

\noindent
In this section we give a short overview of spherical representations, their highest weights, and connections
to propagation of symmetric spaces. Most of the material can be found in
\cite{OW2009}, \cite{OW2010}, \cite{W2010}, \cite{W2009} and \cite{W2011}.
The notation will be as in Section \ref{sec1},
and $G$ or $G_n$ will always stand for a connected compact group. If $k\geqq n$ then we assume that $G_n\subseteqq G_k$ and that
$M_k$ propagates $M_n$. We also assume that each of the symmetric spaces $M_n$ is simply connected.
We denote by $r_k$ and $r_n$ the respective real ranks
of $M_k$ and $M_n$. As always we fix compatible $K_k$-- and
$K_n$--invariant inner products on $\fs_k$, respectively $\fs_n$.
\medskip

For a representation $(\pi, V)$ of $G$ let
$V^K=\{u\in V\mid (\forall k\in K)\,\, \pi (k)u=u\}$. If $(\pi ,V)$ is
irreducible, then we say that $(\pi ,V)$, or simply $\pi$, is $K$--spherical,
or just spherical, if $V^K\not= \{0\}$. It is well known that $\pi$ is
spherical if and only if $\dim V^K=1$. Furthermore, in that case the
highest weight of $\pi$ is contained in $i\fa^*$. Let
\begin{equation}
\Lambda^+=\Lambda^+(G,K)=\left\{ \mu \in i\fa^*\,\left|\,
\tfrac{\langle \mu ,\alpha\rangle}{\langle \alpha, \alpha\rangle}
\in \Z^+ \quad \text{ for all } \quad \alpha \in \Sigma^+\right.\right\}\, .
\end{equation}

\begin{theorem}[Cartan--Helgason]\label{t-CH}  Let $(\pi ,V)$ be
an irreducible representation of $G$, and $\mu$ its highest weight.
Then the following are equivalent.
\begin{enumerate}
\item $(\pi,V)$ is spherical.
\item $\mu\in i\fa^*$ and $\mu \in \Lambda^+$.
\item The multiplicity of $(\pi, V)$ in $L^2(M)$ is $1$.
\item $ \displaystyle{\pi}$ is a subrepresentation of the
representation of  $G$  in  $L^2(M)$.
\end{enumerate}
\end{theorem}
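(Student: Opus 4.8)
The plan is to prove the chain of equivalences $(3)\Leftrightarrow(4)$, $(1)\Leftrightarrow(3)$, and $(1)\Leftrightarrow(2)$, which together give the theorem. The equivalence $(3)\Leftrightarrow(4)$ is the easiest: by the Peter--Weyl theorem $L^2(G)=\widehat{\bigoplus}_{\pi\in\widehat G}V_\pi\otimes V_\pi^*$, and taking right $K$-invariants gives $L^2(G/K)=\widehat{\bigoplus}_{\pi}V_\pi\otimes (V_\pi^*)^K$. Hence the multiplicity of $(\pi,V)$ in $L^2(M)$ equals $\dim (V^*)^K=\dim V^K$ (using that $V$ is unitary, so $V^*\cong\overline V$ and conjugation preserves the dimension of $K$-fixed vectors). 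So $\pi$ occurs in $L^2(M)$ exactly when $V^K\ne\{0\}$, i.e.\ when $\pi$ is spherical, and in that case the well-known fact $\dim V^K\leqq 1$ (which I would prove via the commutative algebra argument below) forces the multiplicity to be exactly $1$. This simultaneously disposes of $(1)\Leftrightarrow(3)$ and $(3)\Leftrightarrow(4)$.

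For $\dim V^K\leqq 1$ I would invoke the standard fact that $(G,K)$ is a Gelfand pair: the convolution algebra $L^1(K\backslash G/K)$ of $K$-bi-invariant functions is commutative, which for a compact symmetric space follows from the existence of an automorphism (coming from the geodesic symmetry, or from the Cartan involution $\theta$ together with an inner automorphism) carrying almost every double coset $KgK$ to $Kg^{-1}K$. Commutativity of this algebra, acting on $V^K$ via $\pi$, forces $\dim V^K\leqq 1$ for irreducible $\pi$. An alternative, more computational route is to show directly that a highest weight vector has nonzero projection onto $V^K$ whenever $V^K\ne 0$, and that $V^K$ is spanned by the $K$-average of a highest weight vector; but the Gelfand-pair argument is cleaner and is the one I would present.

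The substantive part is $(1)\Leftrightarrow(2)$. For the direction $(1)\Rightarrow(2)$: if $0\ne e\in V^K$, decompose $e$ into weight vectors for $\fa_{\C}$ (after complexifying); since $e$ is fixed by $K\supseteq (G^\theta)_o$, and in particular by the centralizer $Z_K(\fa)$ whose Lie algebra is $\fm$, one shows that the highest weight $\mu$ of $\pi$ must occur in the $\fa$-decomposition and must be $\theta$-fixed, hence lies in $i\fa^*$; and being a highest weight it automatically lies in $\Lambda^+$. More precisely, one argues that $\pi(K)e$ spans $V$ (by irreducibility, since $\langle \pi(K)e\rangle$ is $G$-invariant once one checks it is $\fg=\fk\oplus\fs$-invariant — this uses $\fs=[\fa,\fk]+\fm$-type arguments and the Iwasawa-type decomposition on the compact dual), and then that the $\mu$-weight space is hit. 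For $(2)\Rightarrow(1)$: given $\mu\in\Lambda^+\cap i\fa^*$, realize $(\pi_\mu,V_\mu)$ as the irreducible with that highest weight, and construct a nonzero $K$-fixed vector by averaging the highest weight vector $v_\mu$ over $K$: one must show $\int_K\pi_\mu(k)v_\mu\,dk\ne 0$, equivalently that the spherical function $\langle v_\mu,\pi_\mu(\cdot)v_\mu\rangle$ is not identically zero after averaging, which reduces to a nonvanishing statement about the $\fm$-invariance of $v_\mu$ — here one uses that $\mu\in i\fa^*$ makes $v_\mu$ automatically $\fm$-fixed (as $\mu|_{\fm}=0$ and $\fm$ raises weights among the $\fm$-spectrum), so the $\fm$-isotypic computation shows the $K$-average is nonzero.

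\medskip
\noindent\emph{Main obstacle.} I expect the delicate point to be $(2)\Rightarrow(1)$, specifically proving that the $K$-average of the highest weight vector is nonzero: this is where the arithmetic condition on $\mu$ (integrality against all of $\Sigma^+$, not just $\Sigma_0^+$, together with the multiplicity/root-multiplicity bookkeeping of the $m_{\alpha/2}$ terms appearing in Table \eqref{symmetric-case-class}) is actually used, and it is the step that genuinely distinguishes spherical highest weights from arbitrary dominant ones. Handling this cleanly requires either the Harish-Chandra/Helgason integral formula for the spherical function and a careful leading-term analysis, or an explicit branching argument; since the paper later needs the $c$-function of the noncompact dual, I would set up the integral-formula approach so as to reuse that machinery.
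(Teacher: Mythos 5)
The paper does not prove Theorem \ref{t-CH} at all; it simply cites \cite[Ch.~V, Theorem 4.1]{He1984}. Your outline follows that standard proof: Peter--Weyl plus Frobenius reciprocity and the Gelfand-pair commutativity of $L^1(K\backslash G/K)$ for $(1)\Leftrightarrow(3)\Leftrightarrow(4)$, and the $K$-average of a highest weight vector for $(2)\Rightarrow(1)$. Your identification of the delicate point --- nonvanishing of $\int_K\pi_\mu(k)u_\mu\,dk$ --- is exactly right, and the integral-formula route you propose is precisely the machinery the paper sets up in Lemma \ref{lemma1} and Theorem \ref{lemma3}: one shows $\int_K\langle\pi_\mu(k)u_\mu,u_\mu\rangle\,dk=\bc(\mu+\rho)$, which is visibly positive because it equals $\int_{\overline{N}}a(\bar n)^{-\mu-2\rho}\,d\bar n$ with a strictly positive integrand converging for dominant $\mu$. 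One caveat: this is not, as you put it, ``a nonvanishing statement about the $\fm$-invariance of $u_\mu$''; the $M$- and $N$-invariance of $u_\mu$ are \emph{inputs} to that computation (they let you replace $\langle\pi(\bar n)u_\mu,u_\mu\rangle$ by $1$ and descend to $K/M$), and the $M$-invariance for the full, possibly disconnected, $M=Z_K(\fa)$ is exactly where the integrality $\langle\mu,\alpha\rangle/\langle\alpha,\alpha\rangle\in\Z^+$ gets used in the sufficiency direction.

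There is one concrete error in your sketch of $(1)\Rightarrow(2)$: the claim that $\mu$, ``being a highest weight, automatically lies in $\Lambda^+$.'' This is false, and it contradicts your own (correct) closing remark that integrality against the restricted roots genuinely distinguishes spherical highest weights from arbitrary dominant ones. For $G/K=\SU(2)/\SO(2)$ the standard two-dimensional representation has highest weight $\omega\in i\fa^*$ with $\langle\omega,\alpha\rangle/\langle\alpha,\alpha\rangle=\tfrac12\notin\Z$, and indeed it has no $\SO(2)$-fixed vector. So in $(1)\Rightarrow(2)$ you must actually \emph{use} the $K$-fixed vector to force the integrality, and the standard way is a rank-one reduction: for each restricted root $\alpha$ one restricts to the subgroup generated by the root spaces $\fg_{\pm\alpha},\fg_{\pm2\alpha}$, reducing to a sphere or projective space where the spherical highest weights are computed by hand. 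Your plan needs this per-root analysis added; everything else is the standard argument of the cited reference.
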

\noindent See \cite[Theorem 4.1, p. 535]{He1984} for the proof.
\medskip

From now on, if $\mu \in \gL^+$ then $\pi_\mu$ denotes the irreducible spherical representation with highest weight $\mu$.
Define linear functionals $\xi_{j}\in i\ga^*$ by
\begin{equation}\label{fundclass1}
\frac{ \langle \xi_{i},\alpha_{j} \rangle }
{\langle \alpha_{j},\alpha_{j} \rangle} = \delta_{i,j} \quad \text{ for }\quad
1 \leqq j \leqq r\ \ .
\end{equation}
Then $\xi_1,\ldots ,\xi_r\in\gL^+$ and
\[\gL^+=\Z^+\xi_1+\ldots  + \Z^+\xi_r=\left\{\left. \sum_{j=1}^r n_j\xi_j\bmid n_j\in \Z^+\right\}\, .\]
The weights $\xi_j$ are called the
\textit{class 1 fundamental weights for}
$(\mfg,\gk)$.
Set $\Xi=\{\xi_{1},\ldots ,\xi_{r}\}$.  We always have

\[
\rho =\sum_{j=1}^r \rho_j \xi_j \quad \text{ with }\quad
\rho_j =\frac{1}{2}\left( m_{\alpha_j} +\tfrac{m_{\alpha_j/2}}{2}\right)\, .
\]
We write $a=\frac{1}{2}\left( m_{\alpha_1}+\frac{m_{\alpha_1/2}}{2}\right)$.
Using $m_{\alpha_j}=m_{\alpha_i}$ for $i,j\geqq 2$ we set
$b=\frac{1}{2}m_{\alpha_j}$, $j\geqq 2$. Then
\begin{equation}\label{eq-rho}
\rho=a\xi_1+b\sum_{j=2}^r\xi_j\, .
\end{equation}

We will need a particular formulation for each classical root system.
We identify $\fa$ with $\R^r$ so that, as usual,
$\fa=\{(x_{r+1},\ldots ,x_1)\mid x_1+\ldots + x_{r+1} =0\}$ if
$\Psi =A_r$ and otherwise $\fa=\R^r$. Set
$f_1=(0,\ldots ,0,1)$, $f_2=(0,\ldots, 0,1,0)$, \ldots ,
$f_n=(1,0,\ldots , 0)$ where $n=r+1$ for $A_r$ and otherwise $n=r$.
\medskip

For $\Psi=A_{r}$ we have $\Sigma_0^+ =\{f_j-f_i\mid 1\leqq i < j \leqq n\}$, and $\alpha_j =f_{j+1}-f_j$, $j=1,\ldots ,r$. We have
\[\xi_j=2\sum_{i=j+1}^{r+1} f_i\, .\]
Thus
\[\gL^+\simeq \{(m_r,m_{r-1},\ldots ,m_1,0)\in (2 \Z^+)^{r+1} \mid  m_i\leqq m_j \text{ if } i<j\}\, .\]
The multiplicities are constant,
equal to $m=1$, $2$ or $4$. Hence $a=b=1/2,1$, or $2$ and we have
\begin{equation}\label{eq-Arho}
\rho = a\sum_{j=1}^{r} \xi_j=m(r,r-1,\ldots , 1, 0)=2a \sum_{j=1}^{r+1}(j-1)f_j \, .
\end{equation}

If $\Psi$ is of type $B_r$ then we have $\Sigma^+_0 =
  \{f_j\mid j=1,\ldots ,r\}\cup \{f_j\pm f_i\mid 1\leqq i <j\leqq r\}$ and
$\Psi =\{\alpha_1=f_1\}\cup \{\alpha_{i}=f_{i}-f_{i-1}\mid i=2,
  \ldots ,r\}$. Thus
\[\xi_1=\sum_{j=1}^r f_j \text{ and } \xi_j=2\sum_{i=j}^r f_i\, , \,\,  j>1 \, .\]
In particular
\[\gL^+\simeq \{(m_r,\ldots ,m_1)\in (\Z^+)^r \mid m_i\leqq m_j
  \text{ and } m_j-m_i \text{ even for } i<j\}\, .\]
Finally we have
\begin{equation}\label{eq-Brho}
\rho=\sum_{j=1}^n\xi_j=(2r-1,2r-3,\ldots , 3,1)=\sum_{j=1}^r(2j-1)f_j
\end{equation}
in case (2). Case (8), which is the other possibility for $\Psi$ of type $B$,
will be considered in the discussion of $C_r$\,, as explained just after
Table \ref{symmetric-case-class}.
\medskip

If $\Psi$ is of type $C_r$ then we have $\Sigma_0^+=\{2f_j\mid j=1,\ldots ,r\}\cup
	\{f_j\pm f_i\mid 1\leqq i <j\leqq r\}$ and
$\Psi =\{\alpha_1=2f_1\}\cup \{\alpha_j=f_j-f_{j-1}\mid j=2,\ldots ,r\}$. Thus
\[\xi_j=2\sum_{i=j}^r f_i\]
and
\begin{equation}\label{eq-Crho}
\rho = 2a\sum_{j=1}^r f_j\, +\,2b\sum_{\nu =2}^r\sum_{j =\nu }^r f_j
=2\sum_{j=1}^r(a+b(j-1))f_j\, .
\end{equation}

There is just one case where $\Psi$ is of type $D_r$.  There
$a=b=1$. In that case we have $\alpha_1=f_1+f_2$ and
$\alpha_j=f_{j}-f_{j-1}$ for $j\geqq 2$. Thus
\[\xi_1=\sum_{i=1}^r f_i \, , \,\, \xi_2=-f_1 + \sum_{j=2}^r f_j \, ,
	\text{ and } \xi_j=2\sum_{i=j}^r f_i\, \text{ for } j\geqq 3\, .\]
That gives us
\begin{equation}\label{eq-Drho}
\rho=2\sum_{j=2}^r (j-1)f_j\, .
\end{equation}

Fix a $\mu\in \Lambda^+$ and let $(\pi_\mu,V_\mu)$ be the corresponding
spherical representation of $G$. Fix a highest weight
vector $u_\mu \in V_\mu$ and a $K$--fixed vector $e_\mu$. We assume that
$\|u_\mu\|=\|e_\mu\|=1$. For the following it is important to evaluate the
inner product $\langle u_\mu ,e_\mu\rangle$ in a systematic way so that we
can control it as we consider inductive limits of spherical representations
in the next section. The following is well known, but we include the proof
for completeness.
\medskip

First of all we always have
 $\langle u_\mu ,e_\mu\rangle \not=0$. We choose $u_\mu$ and $e_\mu$ so
that $\langle u_\mu ,e_\mu\rangle >0$.
\medskip

Let $\fg^\prime =\fk\oplus i\fs$. As $G$ is compact it is a linear group, thus
contained in a complex linear group $G_{_\C}$ with Lie algebra $\fg_{_\C}$.
Let $G^\prime$
 be the analytic subgroup of $G_{_\C}$ with Lie algebra $\fg^\prime$.
Note that the holomorphic extension of $\theta$ to $\fg_{_\C}$ restricted
to $\fg^\prime$ defines a Cartan involution on $\fg^\prime$. We also denote
this involution and the
 corresponding Cartan involution on $G^\prime$ by $\theta$.
Let $\overline{N}=\theta (N)$.  Then $G^\prime$ has a Iwasawa
decomposition (recall that we are assuming $G$ simply connected,
in particular $K$ is connected)
 \begin{equation}\label{eq-Iwasawa}
 G^\prime =KA^\prime N
 \end{equation}
 where $A^\prime =\exp (i \fa )$ and the Lie algebra of $N$ is
$\fn=\bigoplus_{\alpha \in\Sigma^+}\fg^\prime_\alpha$.
\medskip

 For $x\in G^\prime$ write $x=k(x)a(x)n(x)$ according to the Iwasawa decomposition (\ref{eq-Iwasawa}). We normalize the Haar measure on
 $\overline{N}$ such that
 \[\int_{\overline{N}} a(\bar n )^{-2\rho }\, d\bar n =1\, ,\]
 Then the integral
 \[\bc (\lambda )=\int_{\overline{N}} a(\bar n)^{-\lambda -\rho}\, d\bar n\]
 converges for all $\lambda \in \fa^*_{_\C}$ such that $\Re \langle \lambda , \alpha \rangle >0$ for all $\alpha \in \Sigma^+$. The function
 $\bc (\lambda )$ is the Harish--Chandra $c$--function. It has a meromorphic continuation to all of $\fa^*_{_\C}$ and is given by
 \[\bc (\lambda )=\frac{'c(\lambda )}{'c (\rho )}\]
 where $'c(\lambda )$ is   explicitly given by the Gindikin--Karpelevich product formula. In terms of $\Sigma_0^+$, we have
\begin{equation}
\label{eq:c}
'c(\lambda )= \prod_{\alpha \in \Sigma_0^+}\,  'c_\alpha (\lambda_\alpha)
\end{equation}
where
\begin{equation}
\label{eq:cbeta}
'c_\alpha (\lambda_\alpha )= \frac{2^{-2\lambda_\alpha } \; \Gamma(2\lambda _\alpha)}
{\Gamma\left(\lambda_\alpha+\frac{m_{\alpha /2}}{4}+\frac{1}{2}\right)
\Gamma\left(\lambda_\alpha +\frac{m_{\alpha /2}}{4}+\frac{m_{\alpha }}{2}\right)}\, ,\quad
\lambda_\alpha =\frac{\langle \lambda ,\alpha \rangle}{\langle \alpha  , \alpha \rangle}
\end{equation}
where $\Gamma$ is the Euler $\Gamma$--function
$\Gamma (x) := \int_0^\infty e^{-t} t^{x-1} dt$.
Formula (\ref{eq:cbeta}) looks slightly different from the usual formula
for the $c$--function as found for instance in \cite{He1984}, Ch. IV,
Theorem 6.4, where it is written in terms of positive indivisible roots
($\alpha\in\Sigma^+$ with $\alpha/2 \notin \Sigma^+$) rather than in terms of
positive nonmultipliable roots. The formula (\ref{eq:c}) was used in
\cite{OP2011}. The equivalence of the two formulas follows from the doubling
formula $\sqrt{\pi}\Gamma (2x)=2^{2x-1}\Gamma (x)\Gamma (x+\frac{1}{2})$
for the Gamma function.
\medskip

Let $M=Z_{K}(\fa )$. The following can be found in any standard
reference on symmetric spaces.
\begin{lemma}\label{le-intKN} Let $f\in L^1(K/M)$. Then
\[\int_{K/M} f(kM)\, d(kM) =\int_{\overline{N}} f(k(\bar n )M)\, a(\bar n)^{-2\rho}\, d\bar n\, .\]
\end{lemma}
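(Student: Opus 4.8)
The statement is the standard integration formula relating the push-forward of normalized Haar measure on $K/M$ under the Iwasawa projection $\bar n \mapsto k(\bar n)$ to the measure $a(\bar n)^{-2\rho}\, d\bar n$ on $\overline N$. The plan is to reduce it to the uniqueness of $K$-invariant measure on $K/M$. First I would observe that the map $\overline N \to K/M$, $\bar n \mapsto k(\bar n)M$, is a diffeomorphism onto an open dense subset of $K/M$ (its complement has measure zero), so the right-hand side defines a finite Borel measure $d\nu$ on $K/M$ by $\int_{K/M} f\, d\nu := \int_{\overline N} f(k(\bar n)M)\, a(\bar n)^{-2\rho}\, d\bar n$; by the chosen normalization $\int_{\overline N} a(\bar n)^{-2\rho}\, d\bar n = 1$, this $\nu$ is a probability measure. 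It then suffices to show $\nu$ is $K$-invariant, since the $K$-invariant probability measure on the compact homogeneous space $K/M$ is unique and equals $d(kM)$.

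To check $K$-invariance, fix $k_0 \in K$ and $f \in C(K/M)$, and consider $\int_{\overline N} f(k_0 k(\bar n) M)\, a(\bar n)^{-2\rho}\, d\bar n$. The key computation is the cocycle behavior of the Iwasawa projection: for $x \in G'$, writing $x = k(x)a(x)n(x)$, one has $k(k_0 x) = k_0 k(x)$ and, applied to $x = \bar n$, $k_0 \bar n = k(k_0\bar n) a(k_0 \bar n) n(k_0 \bar n)$ with $a(k_0\bar n) = a(\bar n)$ because $k_0 \in K$ contributes trivially to the $A'N$-part when we move it past. More precisely, I would use that $k_0 \overline N k_0^{-1}$ is again a group with the same Iwasawa data and run the standard change-of-variables argument: substituting $\bar n' $ for the $\overline N$-component of $k_0 \bar n \theta(n(k_0\bar n))^{-1}\cdots$ — the cleanest route is to invoke the well-known fact that under $\bar n \mapsto \bar n' := \overline N\text{-part of } k_0^{-1} k(\bar n) a(\bar n) \cdot (\text{correction})$, the Jacobian is exactly $a(\bar n')^{2\rho} a(\bar n)^{-2\rho}$, which absorbs the density. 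This is precisely the content of the Gindikin–Karpelevich / Harish-Chandra computation of the action of $K$ on $\overline N$ in Iwasawa coordinates; it is classical (e.g. Helgason, \emph{Groups and Geometric Analysis}, Ch.~I–II, or \cite{He1984}, Ch.~IV).

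The main obstacle is purely bookkeeping: tracking how $a(\bar n)$ transforms under left translation by $k_0$ along the fiber, i.e. verifying the Jacobian identity for the induced diffeomorphism of $\overline N$ (equivalently, of the big cell). Once that identity is in hand, $K$-invariance of $\nu$ is immediate, and uniqueness of the invariant probability measure on $K/M$ finishes the proof. Since this lemma is stated as ``found in any standard reference,'' I would in practice simply cite Helgason rather than reproduce the change-of-variables; but the self-contained argument above is the one I would write out if needed.
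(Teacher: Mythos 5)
The paper gives no proof of this lemma at all: it is stated with the remark that it ``can be found in any standard reference on symmetric spaces,'' so there is nothing in the paper to compare your argument against step by step. Your route --- push the measure $a(\bar n)^{-2\rho}\,d\bar n$ forward under $\bar n\mapsto k(\bar n)M$, note it is a probability measure by the chosen normalization, prove $K$-invariance, and invoke uniqueness of the $K$-invariant probability measure on the compact homogeneous space $K/M$ --- is the standard one and is correct. The only step you defer, the Jacobian/cocycle identity for the induced action of $k_0\in K$ on $\overline{N}$, is precisely the classical computation in Helgason (\emph{Groups and Geometric Analysis}, Ch.~I, \S 5), which is surely the reference the authors intend; one small caution is that your intermediate remark that $a(k_0\bar n)=a(\bar n)$, while true, is not the relevant identity --- what you actually need is the decomposition of $k_0\bar n$ in $\overline{N}MA'N$ (not in $KA'N$) and the corresponding transformation of the density, which is what your final ``bookkeeping'' sentence correctly points to.
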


Clearly the vector $\int_ K \pi_\mu (k)u_\mu \, dk$ is $K$--invariant,
and by taking the inner product with $e_\mu$ one gets
$\int_K \pi_\mu (k)u_\mu \, dk = \langle u_\mu , e_\mu\rangle e_\mu$.

\begin{lemma}\label{lemma1}
$\displaystyle \left \langle \int_{K} \pi_{\mu}(k)u_\mu dk , u_\mu
\right \rangle  = \int_K \langle  \pi_{\mu}(k)u_\mu , u_\mu\rangle \, dk
=\bc (\mu +\rho )$.
\end{lemma}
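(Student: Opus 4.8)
The plan is to reduce everything to an integral over $\overline N$. The first equality is free: the inner product is linear and continuous in its first slot, so it commutes with the $V_\mu$--valued integral $\int_K\pi_\mu(k)u_\mu\,dk$ over the compact group $K$. For the second equality, set $I=\int_K\langle\pi_\mu(k)u_\mu,u_\mu\rangle\,dk$. I would first record the two properties of the highest weight vector that do the work: since $\mu$ is also the highest restricted weight, $\fn=\bigoplus_{\alpha\in\Sigma^+}\fg^\prime_\alpha$ annihilates $u_\mu$, so $\pi_\mu(n)u_\mu=u_\mu$ for $n\in N$ and $\pi_\mu(a)u_\mu=a^\mu u_\mu$ for $a\in A^\prime$; and $M=Z_K(\fa)$ fixes $u_\mu$, because $M$ centralizes $A^\prime$ and hence acts on the (one--dimensional) highest $\fa$--weight line $\C u_\mu$ by a unitary character $\chi$, while pairing $\pi_\mu(m)u_\mu=\chi(m)u_\mu$ with the $K$--fixed vector $e_\mu$ and using $\langle u_\mu,e_\mu\rangle\ne0$ forces $\chi\equiv1$. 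In particular $k\mapsto\langle\pi_\mu(k)u_\mu,u_\mu\rangle$ is right $M$--invariant, so $I=\int_{K/M}\langle\pi_\mu(kM)u_\mu,u_\mu\rangle\,d(kM)$ for the normalized measures.

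Next I would evaluate the integrand along $\overline N$. For $\bar n\in\overline N$, writing the Iwasawa decomposition $\bar n=k(\bar n)a(\bar n)n(\bar n)$ and using the relations above gives $\pi_\mu(\bar n)u_\mu=a(\bar n)^\mu\,\pi_\mu(k(\bar n))u_\mu$, hence $\pi_\mu(k(\bar n))u_\mu=a(\bar n)^{-\mu}\,\pi_\mu(\bar n)u_\mu$. On the other hand, writing $\bar n=\exp\bar X$ with $\bar X\in\overline{\fn}=\theta(\fn)$ and expanding the exponential series, $\pi_\mu(\bar n)u_\mu-u_\mu$ is a sum of vectors of $\fa$--weight strictly below $\mu$; since the operators $\pi_\mu(H)$, $H\in\fa$, are commuting and skew--Hermitian, distinct $\fa$--weight spaces are orthogonal for the $G$--invariant inner product, so $\langle\pi_\mu(\bar n)u_\mu,u_\mu\rangle=\|u_\mu\|^2=1$. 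Therefore $\langle\pi_\mu(k(\bar n))u_\mu,u_\mu\rangle=a(\bar n)^{-\mu}$; in particular $0<a(\bar n)^{-\mu}\le1$, since $a(\bar n)^{-\mu}=e^{-\mu(\log a(\bar n))}>0$ and the left side is a matrix coefficient of a unitary representation of $K$ with unit vectors.

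Finally, applying Lemma \ref{le-intKN} with $f(kM)=\langle\pi_\mu(kM)u_\mu,u_\mu\rangle$ and the normalization $\int_{\overline N}a(\bar n)^{-2\rho}\,d\bar n=1$ yields
\[I=\int_{\overline N}\langle\pi_\mu(k(\bar n))u_\mu,u_\mu\rangle\,a(\bar n)^{-2\rho}\,d\bar n=\int_{\overline N}a(\bar n)^{-\mu-2\rho}\,d\bar n=\int_{\overline N}a(\bar n)^{-(\mu+\rho)-\rho}\,d\bar n=\bc(\mu+\rho),\]
where the integral converges absolutely by the bound $a(\bar n)^{-\mu}\le1$ together with $\int_{\overline N}a(\bar n)^{-2\rho}\,d\bar n=1$, and the last equality is the definition of the Harish--Chandra $c$--function. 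I expect the one genuinely delicate point to be the orthogonality step: one has to keep track of the fact that the holomorphically extended representation of $G^\prime$ still carries the $G$--invariant inner product and that $\overline{\fn}$ strictly lowers $\fa$--weights. The verification that $M$ fixes $u_\mu$ is the other place where sphericity enters; everything else is bookkeeping with the Iwasawa decomposition (\ref{eq-Iwasawa}).
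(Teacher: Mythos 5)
Your proof is correct and follows essentially the same route as the paper: pass to $K/M$, push the integral to $\overline{N}$ via Lemma \ref{le-intKN}, and use the Iwasawa decomposition of $\bar n$ together with $\pi_\mu(mn)u_\mu=u_\mu$ and $\langle \pi_\mu(\bar n)u_\mu,u_\mu\rangle=1$ to reduce the integrand to $a(\bar n)^{-\mu-2\rho}$. The only cosmetic differences are that the paper gets $\langle \pi_\mu(\bar n)u_\mu,u_\mu\rangle=1$ by taking adjoints (the adjoint of $\pi_\mu(\bar n)$ is $\pi_\mu$ of an element of $N_{_\C}$ fixing $u_\mu$) rather than by your weight--orthogonality argument, and it simply cites the Cartan--Helgason theorem for the $M$--invariance of $u_\mu$ where you sketch a justification (whose reliance on the highest restricted--weight space being one--dimensional is itself part of that cited theorem).
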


\begin{proof} The proof is a simple calculation using Lemma \ref{le-intKN}.
To simplify the notation we write $u=u_\mu$ and $\pi $ for $\pi_\mu$. The representation $\pi$ extends to a holomorphic representation of $G_{_\C}$. Hence $\pi (x)$ is well defined for $x\in G^\prime$. We note that $\pi (mn)u=u$ for all $n\in N$ and $m\in M$ (see \cite{He1984}, Theorem 4.1, p. 535). In particular
$\langle \pi (\bar n)u,u\rangle = \langle u, \pi (\theta (n)^{-1})u \rangle = \langle u, u\rangle =1$.
$$
\begin{aligned}
\int_{K/M} \langle \pi (k)u, u\rangle d(kM)
  &= \int_{\overline{N}}\langle  \pi (k(\overline{n}))u , u\rangle
	a(\overline{n})^{-2\rho }\, d\overline{n} \\
  &= \int_{\overline{N}} \langle \pi(\overline{n}a(\overline{n})^{-1}
	n')u, u\rangle\, a(\overline{n})^{-2\rho }\, d\overline{n} \quad \text{ where }
	n' \in N \\
  &= \int_{\overline{N}}\langle  \pi (\overline{n})
	u, u\rangle \, a(\overline{n})^{-\mu-2\rho }\, d\overline{n} \\
&=\int_{\overline{N}}a (\bar n)^{-\mu -2\rho }\, d\bar n \\
&=\bc (\mu +\rho )\, .
\end{aligned}
$$
That proves the Lemma.
\end{proof}

\begin{theorem}\label{lemma3}
Let $u_\mu$ and $e_\mu$ be as above.  Then
$\langle u_\mu , e_\mu \rangle =
\sqrt{\bc (\mu + \rho)}$.  In particular
\[ \int_{K } \pi_{\mu }(k)u_\mu \, dk =
\sqrt{\bc (\mu +\rho)} \, e_\mu\,.\]
\end{theorem}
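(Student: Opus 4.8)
The plan is to compute $\langle u_\mu, e_\mu\rangle$ by relating it to the two inner products appearing in Lemmas \ref{lemma1} and \ref{lemma3}. The starting point is the observation recorded just before Lemma \ref{lemma1}: since $\int_K \pi_\mu(k)u_\mu\, dk$ is $K$-fixed and $V_\mu^K$ is one-dimensional, spanned by $e_\mu$, we have $\int_K \pi_\mu(k)u_\mu\, dk = c\, e_\mu$ for some scalar $c$, and pairing with $e_\mu$ gives $c = \langle e_\mu, \int_K \pi_\mu(k)u_\mu\, dk\rangle = \int_K \langle e_\mu, \pi_\mu(k)u_\mu\rangle\, dk = \langle e_\mu, u_\mu\rangle$, using that $e_\mu$ is $K$-fixed (so $\langle e_\mu, \pi_\mu(k)u_\mu\rangle = \langle \pi_\mu(k^{-1})e_\mu, u_\mu\rangle = \langle e_\mu, u_\mu\rangle$). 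Hence
\[
\int_K \pi_\mu(k)u_\mu\, dk = \langle e_\mu, u_\mu\rangle\, e_\mu = \langle u_\mu, e_\mu\rangle\, e_\mu,
\]
the last equality because we arranged $\langle u_\mu, e_\mu\rangle > 0$, so it is real.

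Next I would take the inner product of this identity with $u_\mu$ on the right. On one hand this yields $\langle \int_K \pi_\mu(k)u_\mu\, dk,\, u_\mu\rangle$, which by Lemma \ref{lemma1} equals $\bc(\mu+\rho)$. On the other hand it equals $\langle u_\mu, e_\mu\rangle\langle e_\mu, u_\mu\rangle = |\langle u_\mu, e_\mu\rangle|^2 = \langle u_\mu, e_\mu\rangle^2$, again using positivity. Comparing the two gives $\langle u_\mu, e_\mu\rangle^2 = \bc(\mu+\rho)$, and taking the positive square root yields $\langle u_\mu, e_\mu\rangle = \sqrt{\bc(\mu+\rho)}$. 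Substituting this back into the displayed identity for $\int_K \pi_\mu(k)u_\mu\, dk$ gives the final formula $\int_K \pi_\mu(k)u_\mu\, dk = \sqrt{\bc(\mu+\rho)}\, e_\mu$.

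The argument is essentially bookkeeping once Lemma \ref{lemma1} is in hand, so there is no serious obstacle; the only points requiring a little care are the normalizations. One should check that $\bc(\mu+\rho)$ is indeed a positive real number, so that its square root makes sense — this is implicit in the setup (it equals $\langle u_\mu, e_\mu\rangle^2 \geq 0$, and it is nonzero because $\langle u_\mu, e_\mu\rangle \neq 0$), and it is consistent with the normalization $\int_{\overline N} a(\bar n)^{-2\rho}\, d\bar n = 1$ chosen for the $c$-function. One also wants $\|e_\mu\| = \|u_\mu\| = 1$ throughout, which is assumed. I would present the proof in three short steps: (1) derive $\int_K \pi_\mu(k)u_\mu\, dk = \langle u_\mu, e_\mu\rangle e_\mu$; (2) pair with $u_\mu$ and invoke Lemma \ref{lemma1} to get $\langle u_\mu, e_\mu\rangle^2 = \bc(\mu+\rho)$; (3) take square roots and substitute back.
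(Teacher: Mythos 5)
Your proposal is correct and follows essentially the same route as the paper: both use the identity $\int_K \pi_\mu(k)u_\mu\, dk = \langle u_\mu, e_\mu\rangle e_\mu$ together with Lemma \ref{lemma1} to obtain $\langle u_\mu, e_\mu\rangle^2 = \bc(\mu+\rho)$, then take the positive square root. Your extra care about positivity and realness of $\langle u_\mu, e_\mu\rangle$ is consistent with the paper's normalization.
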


\begin{proof} We use the same notation as in Lemma \ref{lemma1}.  By that lemma, we see that
$$
\begin{aligned}
   \bc(\mu+\rho) &= \left \langle \int_{K} \pi_{\mu}(k)u_\mu dk , u_\mu \right \rangle  \\
                                                    &= \langle \langle u_\mu, e_\mu \rangle e_\mu, u_\mu \rangle \\
                                                    &= \langle u_\mu, e_\mu \rangle ^2
\end{aligned}
$$
Thus we see that $\langle u_\mu, e_\mu \rangle = \sqrt{\bc(\mu + \rho)}$.  The rest of the result follows when we recall that $\int_K \pi_\mu (k)u_\mu \, dk = \langle u_\mu , e_\mu\rangle e_\mu$.
\end{proof}

\begin{theorem}\label{th-cFctProd} For $\alpha\in\Sigma_0^+$ let $x_\alpha :=\frac{1}{4}\left(m_{\alpha/2}+2\right)$ and $y_\alpha=\frac{1}{4}\left(m_{\alpha/2}+2m_\alpha\right)$. Let $\mu\in \Lambda^+$. Then $\mu_\alpha \in \Z^+$ for all $\alpha \in \Sigma^+_0$ and
\begin{equation}\label{eq-cFctProd}
\bc (\mu +\rho) =\prod_{\alpha\in \Sigma_0^+}\left(\left(1+\frac{x_\alpha}{\rho_\alpha}\right)\left(1+\frac{y_\alpha}{\rho_\alpha}\right)\right)^{-\mu_\alpha}
\cdot\prod_{j=0}^{\mu_\alpha-1}
\frac{\left(1+\frac{j}{2\rho_\alpha}\right)\left(1+\frac{\mu_\alpha+j}{2 \rho_\alpha}\right)}
{\left(1+\frac{j}{x_\alpha+\rho_\alpha}\right)\left(1+\frac{j}{y_\alpha+\rho_\alpha}\right)}
\end{equation}
where the product $\displaystyle \prod_{j=0}^{\mu_\alpha-1}$ is understood
to be $1$ if $\mu_\alpha=0$.
\end{theorem}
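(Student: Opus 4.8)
The plan is to compute $\bc(\mu+\rho)$ one nonmultipliable positive root at a time. Starting from $\bc(\lambda)='c(\lambda)/'c(\rho)$ together with \eqref{eq:c}, one gets
\[
\bc(\mu+\rho)=\prod_{\alpha\in\Sigma_0^+}\frac{'c_\alpha(\mu_\alpha+\rho_\alpha)}{'c_\alpha(\rho_\alpha)},
\]
using $(\mu+\rho)_\alpha=\mu_\alpha+\rho_\alpha$. Since $\Sigma_0^+\subseteq\Sigma^+$, the defining condition of $\Lambda^+$ already forces $\mu_\alpha\in\Z^+$ for every $\alpha\in\Sigma_0^+$, which is the first assertion. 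Thus the theorem reduces to a single-root identity: fixing $\alpha\in\Sigma_0^+$ and writing $k:=\mu_\alpha\in\Z^+$ and $s:=\rho_\alpha$, I want
\[
\frac{'c_\alpha(k+s)}{'c_\alpha(s)}=\Bigl(\bigl(1+\tfrac{x_\alpha}{s}\bigr)\bigl(1+\tfrac{y_\alpha}{s}\bigr)\Bigr)^{-k}\prod_{j=0}^{k-1}\frac{\bigl(1+\tfrac{j}{2s}\bigr)\bigl(1+\tfrac{k+j}{2s}\bigr)}{\bigl(1+\tfrac{j}{x_\alpha+s}\bigr)\bigl(1+\tfrac{j}{y_\alpha+s}\bigr)}.
\]

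To prove this single-root identity I would first record, straight from the definitions, that $x_\alpha=\tfrac{m_{\alpha/2}}{4}+\tfrac12$ and $y_\alpha=\tfrac{m_{\alpha/2}}{4}+\tfrac{m_\alpha}{2}$, so that the two denominator Gamma factors in \eqref{eq:cbeta} are precisely $\Gamma(\lambda_\alpha+x_\alpha)$ and $\Gamma(\lambda_\alpha+y_\alpha)$. Forming the ratio $'c_\alpha(k+s)/'c_\alpha(s)$, the powers of $2$ combine to $2^{-2k}$, and since $k\in\Z^+$ every Gamma quotient collapses to a finite Pochhammer product via $\Gamma(t+n)/\Gamma(t)=\prod_{j=0}^{n-1}(t+j)$, giving
\[
\frac{'c_\alpha(k+s)}{'c_\alpha(s)}=2^{-2k}\,\frac{\prod_{j=0}^{2k-1}(2s+j)}{\prod_{j=0}^{k-1}(s+x_\alpha+j)\prod_{j=0}^{k-1}(s+y_\alpha+j)}.
\]
Next I would separate the numerator into its even- and odd-indexed factors, which is exactly the doubling formula in the form $\prod_{j=0}^{2k-1}(2s+j)=4^k\prod_{j=0}^{k-1}(s+j)(s+j+\tfrac12)$ and cancels the $2^{-2k}$. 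Pulling $s$ out of each $s+j$ and $s+j+\tfrac12$, and $s+x_\alpha$ (resp.\ $s+y_\alpha$) out of each $s+x_\alpha+j$ (resp.\ $s+y_\alpha+j$), turns the right-hand side into $\tfrac{s^{2k}}{(s+x_\alpha)^k(s+y_\alpha)^k}$ times $\prod_{j=0}^{2k-1}(1+\tfrac{j}{2s})$ divided by $\prod_{j=0}^{k-1}(1+\tfrac{j}{s+x_\alpha})(1+\tfrac{j}{s+y_\alpha})$. The prefactor is $\bigl((1+x_\alpha/s)(1+y_\alpha/s)\bigr)^{-k}$, and reindexing $\prod_{j=0}^{2k-1}(1+\tfrac{j}{2s})=\prod_{j=0}^{k-1}(1+\tfrac{j}{2s})(1+\tfrac{k+j}{2s})$ gives the single-root identity. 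Taking the product over $\alpha\in\Sigma_0^+$ then yields \eqref{eq-cFctProd}.

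I do not expect a genuine obstacle here; once the Gamma-product form of $'c_\alpha$ is in hand the argument is pure bookkeeping. The two points that need care are the even/odd split of the Pochhammer product (equivalently, a clean use of the doubling formula $\sqrt{\pi}\,\Gamma(2x)=2^{2x-1}\Gamma(x)\Gamma(x+\tfrac12)$ applied at $x=s$ and $x=s+k$) and the fact that $\rho_\alpha>0$ and $s+x_\alpha,\,s+y_\alpha>0$ for every $\alpha\in\Sigma_0^+$, which is what makes all the finite products run over nonzero reals and keeps every Gamma value off its poles; this positivity is immediate from $x_\alpha,y_\alpha\ge0$ and the explicit formulas \eqref{eq-Arho}--\eqref{eq-Drho} for $\rho$.
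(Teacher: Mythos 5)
Your proposal is correct and follows essentially the same route as the paper: factor $\bc(\mu+\rho)$ over $\Sigma_0^+$ as ratios $'c_\alpha(\mu_\alpha+\rho_\alpha)/{}'c_\alpha(\rho_\alpha)$, identify $x_\alpha,y_\alpha$ with the shifts in the Gamma denominators of (\ref{eq:cbeta}), collapse each Gamma quotient to a Pochhammer product via $\Gamma(t+n)=\left(\prod_{j=0}^{n-1}(t+j)\right)\Gamma(t)$, and normalize by pulling out $\rho_\alpha$, $x_\alpha+\rho_\alpha$, $y_\alpha+\rho_\alpha$. The only cosmetic difference is your detour through the even/odd (doubling-formula) split of $\prod_{j=0}^{2k-1}(2s+j)$ before recombining and re-splitting into halves; the paper just factors $2\rho_\alpha$ out of each term directly.
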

\begin{proof} By (\ref{eq:c}) and
(\ref{eq:cbeta}) we can write $\bc (\mu +\rho) =\prod_{\alpha\in\Sigma_0^+}\bc_\alpha (\mu_\alpha+\rho_\alpha) $ with
\[\bc_\alpha (\mu_\alpha +\rho_\alpha) =\frac{2^{-2\mu_\alpha} \Gamma(2(\mu_\alpha + \rho_\alpha))}{\Gamma (2\rho_\alpha)}
\frac{\Gamma (\rho_\alpha + x_\alpha)}{\Gamma(\mu_\alpha +\rho_\alpha+x_\alpha)}\frac{ \Gamma(\rho_\alpha+y_\alpha)}{\Gamma(\mu_\alpha+\rho_\alpha+y_\alpha)}\, .\]
Now, using that $\mu_\alpha\in \Z^+$ and $\Gamma (x+1)=x\Gamma (x)$, we get for $\mu_\alpha\not= 0$:
\begin{eqnarray*}
\Gamma(2(\mu_\alpha + \rho_\alpha))&=&\left(\prod_{j=1}^{2\mu_\alpha}(2(\mu_\alpha +\rho_\alpha)-j)\right)\Gamma (2\rho_\alpha)\\
&=& \left(\prod_{j=0}^{2\mu_\alpha-1}(2\rho_\alpha+j)\right)\Gamma (2\rho_\alpha)\\
&=&2^{2\mu_\alpha}\rho_\alpha^{2\mu_\alpha}\left(\prod_{j=0}^{\mu_\alpha-1}
\left(1+\frac{j}{2\rho_\alpha}\right)
\left(1+\frac{\mu_\alpha+j}{2\rho_\alpha}\right)\right)\, \Gamma (2\rho_\alpha)\, .
\end{eqnarray*}
Similarly
\[
\frac{\Gamma (\rho_\alpha + x_\alpha)}{\Gamma(\mu_\alpha +\rho_\alpha+x_\alpha)}\frac{ \Gamma(\rho_\alpha+y_\alpha)}{\Gamma(\mu_\alpha+\rho_\alpha+y_\alpha)}=\prod_{j=0}^{\mu_\alpha -1}\frac{1}{(\rho_\alpha+x_\alpha+j)(\rho_\alpha +y_\alpha +j)}\]
and the claim follows.
\end{proof}

\section{Inductive Limits of Spherical Representations}\label{sec3}
\setcounter{equation}{0}
\setcounter{theorem}{0}

\noindent
In this section we recall the construction of inductive limits of spherical
representations  \cite[Section 3]{W2010} and \cite{OW2010}. We always assume
that $k\geqq n$ and that we have a sequence $\{M_k=G_k/K_k\}$ such that
$M_k$ propagates $M_n$. The index $k$ (respectively $n$) will
indicate objects related to $G_k$ (respectively $G_n$).
As in \cite{W2011} or \cite{W2010}, our description of the root system and
the fundamental weights gives

\begin{lemma} \label{resmult} Assume that $M_k$ propagates $M_n$. Let
\smallskip

\centerline{$\Psi_n=\{\alpha_{n,1},\ldots \alpha_{n,r_n}\}$ and
$\Xi_n=\{\xi_{n,1},\ldots , \xi_{n,r_n}\}$}
\smallskip
\noindent
and similarly for $M_k$. Assume that $j\leqq r_n$.  Then
\begin{enumerate}
\item[{\rm 1.}]  $\alpha_{k,j}$ is the unique element of
$\Psi_{k}$ whose restriction to $\fa_n$ is $\alpha_{n,j}$.
\item[{\rm 2.}]\label{label3} If $\mu_n=\sum_{j=1}^{r_n}k_j\xi_{n,j}\in \Lambda_n^+$, then $\mu_k:=\sum_{j=1}^{r_n}k_j\xi_{k,j}\in\Lambda_k^+$
and $\mu_k|_{\fa_n}=\mu_n$.
\end{enumerate}
\end{lemma}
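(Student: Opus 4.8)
The plan is to reduce both assertions to the explicit coordinate realizations of $\Sigma_0$, $\Psi$ and $\Xi$ recorded above, treated separately for each of the four classical families $A_r$, $B_r$, $C_r$, $D_r$. Throughout I write $f_i^{(n)}$, $f_i^{(k)}$ for the vectors $f_i$ attached to $M_n$, $M_k$. First I would dispose of the case where $M_k$ propagates $M_n$ via condition~(i), i.e.\ $\fa_k=\fa_n$: then $r_k=r_n$, and since $\Psi_k$ and $\Psi_n$ are of the same type (hence the same rank, hence have the same standard realization) one has $\Sigma_{0,k}=\Sigma_{0,n}$, $\Psi_k=\Psi_n$ and $\Xi_k=\Xi_n$, so both statements are trivial. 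So I may assume we are in case~(ii), where by the normalization of the realizations above $\fa_n$ sits in $\fa_k$ in such a way that restriction to $\fa_n$ sends $f_i^{(k)}\mapsto f_i^{(n)}$ in the ``old'' range of indices and $f_i^{(k)}\mapsto 0$ for the ``new'' ones.

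For part~1 I would just read the restrictions off the formulas in this section. For $j\leqq r_n$ the simple root $\alpha_{k,j}$ — equal to $f_{j+1}^{(k)}-f_j^{(k)}$ in type $A_r$, and to one of $f_1^{(k)}$, $2f_1^{(k)}$, $f_1^{(k)}+f_2^{(k)}$ (for $j=1$) or $f_j^{(k)}-f_{j-1}^{(k)}$ (for $j\geqq 2$) in types $B_r$, $C_r$, $D_r$ — involves only ``old'' $f$'s and therefore restricts to precisely $\alpha_{n,j}$; on the other hand each new simple root $\alpha_{k,j}$, $j>r_n$, involves a ``new'' $f$, and a one-line check shows its restriction to $\fa_n$ is either $0$ or an element of $i\fa_n^*$ that is not a root of $M_n$ at all (for instance in type $A_r$ one gets $\alpha_{k,r_n+1}|_{\fa_n}=-f_{r_n+1}^{(n)}$, which on the hyperplane $\sum x_i=0$ equals $f_1^{(n)}+\cdots+f_{r_n}^{(n)}$, not a root of $A_{r_n}$), hence is not in $\Psi_n$. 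Since the $\alpha_{n,j}$ are distinct, this yields both existence and uniqueness of the element of $\Psi_k$ restricting to $\alpha_{n,j}$.

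For part~2, the membership $\mu_k\in\Lambda_k^+$ is immediate: $\mu_n\in\Lambda_n^+$ forces every $k_j\in\Z^+$, so $\mu_k=\sum_{j\leqq r_n}k_j\xi_{k,j}$ lies in $\Z^+\xi_{k,1}+\cdots+\Z^+\xi_{k,r_k}=\Lambda_k^+$. For the equality $\mu_k|_{\fa_n}=\mu_n$ it is enough to prove $\xi_{k,j}|_{\fa_n}=\xi_{n,j}$ for $j\leqq r_n$, and the cleanest route I would take is this: for $i\leqq r_n$ the root $\alpha_{k,i}$ involves only ``old'' $f$'s, hence vanishes on the orthogonal complement of $\fa_n$ in $\fa_k$; since the fixed inner products on $\fs_n\subset\fs_k$ are compatible, the restriction map $i\fa_k^*\to i\fa_n^*$ corresponds to orthogonal projection, and therefore $\langle\lambda,\alpha_{k,i}\rangle=\langle\lambda|_{\fa_n},\alpha_{n,i}\rangle$ for every $\lambda\in i\fa_k^*$ and every $i\leqq r_n$ (in particular $\langle\alpha_{k,i},\alpha_{k,i}\rangle=\langle\alpha_{n,i},\alpha_{n,i}\rangle$). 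Taking $\lambda=\xi_{k,j}$ and dividing, the defining relations \eqref{fundclass1} for $\Xi_k$ say exactly that $\xi_{k,j}|_{\fa_n}$ satisfies the defining relations for $\xi_{n,j}$; since the $\alpha_{n,i}$ form a basis of $\fa_n^*$ this forces $\xi_{k,j}|_{\fa_n}=\xi_{n,j}$, whence $\mu_k|_{\fa_n}=\sum_{j\leqq r_n}k_j\xi_{n,j}=\mu_n$. (One could instead verify $\xi_{k,j}|_{\fa_n}=\xi_{n,j}$ directly from the displayed formulas for $\Xi$ in each of the four types.) The only genuine work is the type-by-type bookkeeping in part~1 — in particular the $A_r$ hyperplane and the convention, noted after Table~\ref{symmetric-case-class}, of treating case~(8) as type $C$ — but in each instance it is a direct inspection of the formulas above.
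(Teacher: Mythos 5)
Your proof is correct and is essentially the argument the paper intends: the lemma is stated there without proof, with the assertion that it follows from the explicit coordinate descriptions of $\Psi_n$, $\Psi_k$, $\Xi_n$, $\Xi_k$ (details deferred to \cite{W2011}, \cite{W2010}), and your type-by-type verification together with the orthogonal-projection argument for $\xi_{k,j}|_{\fa_n}=\xi_{n,j}$ supplies exactly those details. The only blemish is the parenthetical claim that a new simple root restricts to something that is ``not a root of $M_n$ at all'' --- in types $B$, $C$ (and the $BC$ situations) the restriction $-f_{r_n}$ can in fact be a negative root of $\Sigma_n$ --- but the conclusion you actually use, namely that this restriction is never equal to any $\alpha_{n,j}\in\Psi_n$, is still immediate, so the uniqueness argument stands.
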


For $I=(k_1,\ldots ,k_{r_n})\in (\Z^+)^{r_n}$ define
$\mu_I:=\mu(I)=k_1\xi_{n,1}+\ldots +k_{r_n}\xi_{n,r_n}$.
Lemma \ref{resmult} allows us to form direct system of representations, as
follows.  For $\ell \in \N$ denote by $0_\ell = (0,\ldots ,0)$ the zero vector
in $\R^\ell$.  For
$I_{n}=(k_1,\ldots ,k_{r_n})\in (\Z^+)^{r_n}$ let
\begin{equation}\label{I-notation}
\begin{aligned}
\bullet\ &\mu_{I,n}=\mu (I_n)= \sum_{j=1}^{r_n}k_j\xi_{n,j}\in \Lambda^+_n;\\
\bullet\ &\pi_{I,n} = \pi_{\mu (I_n)} \text{ the corresponding spherical
representation};\\
\bullet\ &V_{I,n} = V_{\mu (I_n)}  \text{ a fixed Hilbert space for the
representation } \pi_{I,n};\\
\bullet\ &u_{I,n} = u_{\mu (I_n)}  \text{ a highest weight unit vector in }
V_{ I,n}\, , \,\, \| u_{I,n}\|=1;\\
\bullet\ &e_{I,n} = e_{\mu (I_n)} \text{ a } K_n\text{--fixed unit vector in }
V_{I,n}\text{ such that } \langle u_{I,n}, e_{I,n}\rangle >0
\end{aligned}
\end{equation}
Later, $I_n$ will be fixed and we will write $\pi_n$, $V_n$, $\mu_n$,
$u_n$, and $e_n$ without further comments.

\begin{theorem}\label{l-inductiveSystemOfRep}
Assume that $M_k$ propagates $M_n$.
Let $(\pi_{I,n},V_{I,n})$ be an irreducible spherical representation of $G_n$
with highest weight $\mu_{I,n}\in\Lambda^+_n$.
Let $I_k=(I_n,0_{r_k-r_n})$. Then the following hold.
\begin{enumerate}
\item[{\rm 1.}] The $G_n$--submodule of $V_{I,k}$ generated
by $u_{I,k}$ is irreducible and isomorphic to $V_{I,n}$.
\item[{\rm 2.}] The multiplicity of $\pi_{I,n}$ in
$\pi_{I,k}|_{G_n}$ is $1$.
\end{enumerate}
\end{theorem}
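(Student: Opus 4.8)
The plan is to analyze the branching of the spherical representation $\pi_{I,k}$ of $G_k$ restricted to $G_n$ using highest weight theory and the propagation structure. The two claims are really two faces of the same fact: that the cyclic $G_n$-module generated by a highest weight vector of $\pi_{I,k}$ is an irreducible copy of $\pi_{I,n}$, occurring with multiplicity one in the restriction.

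First I would set up the weight-theoretic framework. Choose a maximal torus $T_n$ of $G_n$ with $\fa_n$ in its Lie algebra, extend to a maximal torus $T_k$ of $G_k$ with $\fa_n \subseteq \fa_k$ (this is possible by the compatibility conventions of Section \ref{sec1}), and fix compatible positive systems so that restriction of weights from $\fa_k$ to $\fa_n$ is order-preserving. The highest weight $\mu_{I,k}$ of $\pi_{I,k}$ is, by Lemma \ref{resmult}(2), equal to $\sum_{j=1}^{r_n} k_j \xi_{k,j}$, and its restriction to $\fa_n$ is exactly $\mu_{I,n} = \sum_{j=1}^{r_n} k_j \xi_{n,j}$. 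Since $\mu_{I,k}$ is a $T_k$-weight vanishing on the torus directions that pair trivially with $\fa_k$ (it lies in $i\fa_k^*$), its restriction to $T_n$ is just its restriction to $\fa_n$, namely $\mu_{I,n} \in \Lambda_n^+ \subset i\fa_n^*$.

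The key step is then: the $G_n$-submodule $W := \overline{\mathrm{span}}\,\pi_{I,k}(G_n)u_{I,k}$ of $V_{I,k}$ is a highest weight module for $G_n$ with highest weight $\mu_{I,n}$, generated by a single weight vector $u_{I,k}$ that is annihilated by the nilradical $\fn_n^+$ of a Borel of $G_n$. I would verify this by checking that $u_{I,k}$ is a $\fn_n^+$-highest weight vector: since $\Sigma_n^+ \subseteq \Sigma_k^+|_{\fa_n}\setminus\{0\}$, every positive root space of $\fg_{n,\C}$ sits inside the sum of positive root spaces of $\fg_{k,\C}$ (plus the centralizer of $\fa_k$ acting by scalars compatibly), hence kills $u_{I,k}$. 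So $W$ contains an irreducible $G_n$-submodule $V'$ with highest weight $\mu_{I,n}$, i.e.\ $V' \cong V_{I,n}$; but $W$ is generated by $u_{I,k}$, which lies in $V'$, so $W = V' \cong V_{I,n}$. This proves (1). For (2), the multiplicity of $\pi_{I,n}$ in $\pi_{I,k}|_{G_n}$ equals the dimension of the space of $\fn_n^+$-highest weight vectors in $V_{I,k}$ of weight $\mu_{I,n}$; I would argue this is one-dimensional because $\mu_{I,n}$ is the restriction of the extreme weight $\mu_{I,k}$ and, under propagation, the $\mu_{I,k}$-weight space in $V_{I,k}$ is already one-dimensional while any other weight of $V_{I,k}$ restricting to $\mu_{I,n}$ is not $\fn_n^+$-highest (this is where the precise combinatorics of how the Dynkin diagram grows only at the left end enters). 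Alternatively, and more cleanly, one can invoke the known branching results for classical groups (Gelfand--Tsetlin type patterns) from \cite{W2010} to read off multiplicity one directly.

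The main obstacle I anticipate is the multiplicity-one statement (2): showing that no \emph{other} $G_n$-constituent of $\pi_{I,k}|_{G_n}$ is isomorphic to $\pi_{I,n}$. Establishing that $W \cong V_{I,n}$ occurs (claim (1)) is essentially formal once the highest-weight bookkeeping is in place, but ruling out a second copy requires genuinely using the propagation hypothesis — the fact that the new simple roots are added only at the left end of the diagram, so that the "extra" fundamental weights $\xi_{k,j}$ for $j > r_n$ contribute nothing to $\mu_{I,k}$. I would handle this either by a direct weight-multiplicity count using the explicit description of $\Lambda^+$ and the $\xi_j$'s given for each classical type in Section \ref{sec2}, or by citing the branching law from \cite[Section 3]{W2010}; given the paper's stated reliance on that reference, the citation route is the natural one, with the weight-theoretic argument above recorded as the conceptual reason.
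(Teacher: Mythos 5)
The paper gives no proof of this theorem: it is stated as recalled material, with the argument delegated to \cite[Section 3]{W2010} and \cite{OW2010}, so your fallback of citing the branching law there coincides with what the paper actually does. Your sketch of part (1) is the standard highest--weight argument and is sound modulo one point you wave at: under propagation one must check that the Cartan and Borel subalgebras can be chosen so that $\fb_n\subseteq\fb_k$ (equivalently, that $u_{I,k}$ is killed not only by $\fn_n$ but also by the relevant part of $\mathfrak{z}_{\fk_n}(\fa_n)$, which centralizes $\fa_n$ but not $\fa_k$); this compatibility is exactly what the definition of propagation in \cite{OW2009}, \cite{OW2010} is engineered to provide, so invoking it is legitimate but should be said explicitly. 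For part (2) your proposed self--contained argument is incomplete at precisely the step you flag: the assertion that no weight of $V_{I,k}$ other than $\mu_{I,k}$ restricting to $\mu_{I,n}$ supports an $\fn_n^+$--highest vector \emph{is} the multiplicity--one statement, and proving it requires the type--by--type analysis of which sums of simple roots of $\fg_k$ restrict to zero on $\fa_n$; since you do not carry that out, the citation to \cite{W2010} is doing the real work there --- which, again, is also the paper's position.
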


\begin{remark} {\rm From this point on, when $n\leqq k$ we will always
assume that the Hilbert space $V_{I,n}$ is realized inside $V_{I,k}$ as
the span of $\pi_{I,k}(G_n)u_{I,k}$, in other words by identification of
highest weight unit vectors. Thus we can then assume that $u_{I,n}=u_{I,k}$.
On the other hand we almost never have $e_{I,n}=e_{I,k}$ under this inclusion.
But we can always assume that $e_{I,k}=q(k,n)\, e_{I,n} + e_{k,n}^\perp$
where $q(k,n)=\langle e_{I,k},e_{I,n}\rangle >0$ and $\langle e_{I,n},e_{k,n}^\perp\rangle =0$. One of our
aims is to evaluate $\langle e_{I,k},e_{I,n}\rangle $ in terms of $c$--functions.}
\hfill $\diamondsuit$
\end{remark}

Theorem \ref{l-inductiveSystemOfRep}
allows us to define a inductive limit of spherical representation starting by a given spherical representation $(\pi_{I,n},V_{I,n})$ of $G_n$.
We have isometric embeddings $V_n = V_{I,n} \hookrightarrow V_{n+1}=V_{I,n+1}$ defined by the map
$u_n=u_{I,n} \mapsto u_{n+1}=u_{I,n+1}$. As $u_{I,n}$ is independent of $n$ we simply write $u$ for the fixed highest weight vector. Then the algebraic inductive limit $\varinjlim V_\infty$ is a pre--Hilbert space with inner product
$\langle v,w\rangle =\langle v,w\rangle_{V_k}$ if $v,w\in V_k$. This inner product is well defined as the embeddings $V_n\hookrightarrow V_k$ are
isometric. We denote by $\mu_\infty =\varinjlim \mu_{I,n}\in i\fa_\infty^*$
and  $V_{\infty ,\mu_\infty}=V_\infty$ the Hilbert space completion of
$\varinjlim V_n$.  Notice that $u\in V_\infty$.
\medskip

Our main theorem in this article is the following theorem:
\begin{theorem}\label{maintheorem} Let the notation be as above and assume that $\mu\not= 0$.
Then $V_\infty^{K_\infty}\not=\{0\}$ if and only if the ranks of
the compact riemannian symmetric spaces $M_n$ are bounded.
Thus $V_\infty^{K_\infty}\not=\{0\}$ only for the symmetric spaces
$\SO(p+\infty)/\SO(p)\times \SO(\infty)$,
$\SU(p+\infty)/\mathrm{S}(\U(p)\times \U(\infty))$ and
$\Sp(p+\infty)/\Sp(p)\times \Sp(\infty)$ where $0 < p < \infty$.
\end{theorem}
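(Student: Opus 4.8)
The plan is to reduce the statement to the behavior of a single infinite product, namely the values $\bc(\mu_k + \rho_k)$ as $k \to \infty$, and then to analyze that product rootsystem-by-rootsystem using the explicit formula in Theorem~\ref{th-cFctProd}. First I would recall from the Remark following Theorem~\ref{l-inductiveSystemOfRep} that under the identification $u_{I,n} = u$ we have, by Theorem~\ref{lemma3}, $\langle u, e_{I,k}\rangle = \sqrt{\bc(\mu_k + \rho_k)}$ for every $k$, and that $e_{I,k} = q(k,n)\,e_{I,n} + e_{k,n}^\perp$ with $q(k,n) = \langle e_{I,k}, e_{I,n}\rangle > 0$. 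The key observation is that the sequence $\{e_{I,k}\}$ is \emph{not} a Cauchy sequence in $V_\infty$ in general, but one can show that $V_\infty^{K_\infty} \neq \{0\}$ if and only if the sequence of positive reals $\bc(\mu_k + \rho_k)$ has a nonzero limit as $k \to \infty$. Indeed, the orthogonal projection of $u$ onto $V_\infty^{K_\infty}$ (when the latter is nonzero) is $\lim_k (\text{proj onto } V_{I,k}^{K_k})u$, whose squared norm is $\lim_k |\langle u, e_{I,k}\rangle|^2 = \lim_k \bc(\mu_k + \rho_k)$; conversely if this limit is zero then the $K_k$-fixed components of $u$ shrink to $0$ and, since $u$ is cyclic for $G_\infty$ and the inner products are $G_k$-invariant, no nonzero $K_\infty$-fixed vector can survive. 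So the whole theorem comes down to: $\lim_{k\to\infty}\bc(\mu_k + \rho_k) > 0$ iff the ranks $r_k$ stay bounded.

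Next I would compute $\bc(\mu_k + \rho_k)$ using Theorem~\ref{th-cFctProd}. Write $\mu = \mu_{I,n} = \sum_{j=1}^{r_n} k_j \xi_{n,j}$, fixed once and for all, with $\mu \neq 0$. Under propagation, Lemma~\ref{resmult} tells us that for $j \leqq r_n$ the coefficients $k_j$ are unchanged, while for the newly added simple roots $\alpha_{k,j}$ with $r_n < j \leqq r_k$ the coefficient of $\mu_k$ is $0$. Thus the factors of $\bc(\mu_k + \rho_k)$ indexed by roots $\alpha \in \Sigma_{0,k}^+$ with $\mu_\alpha = 0$ contribute exactly $1$ (the $\prod_{j=0}^{\mu_\alpha - 1}$ part is empty and the prefactor is raised to the power $-\mu_\alpha = 0$). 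Hence
\[
\bc(\mu_k + \rho_k) = \prod_{\alpha \in \Sigma_{0,k}^+ :\ \mu_\alpha \neq 0}
\left(\left(1 + \tfrac{x_\alpha}{\rho_\alpha}\right)\left(1 + \tfrac{y_\alpha}{\rho_\alpha}\right)\right)^{-\mu_\alpha}
\prod_{j=0}^{\mu_\alpha - 1}
\frac{\left(1 + \tfrac{j}{2\rho_\alpha}\right)\left(1 + \tfrac{\mu_\alpha + j}{2\rho_\alpha}\right)}
{\left(1 + \tfrac{j}{x_\alpha + \rho_\alpha}\right)\left(1 + \tfrac{j}{y_\alpha + \rho_\alpha}\right)}.
\]
The roots $\alpha$ with $\mu_\alpha \neq 0$ are those whose restriction to $\fa_n$ is nonzero and pairs nontrivially with $\mu$; because $\mu$ is supported on the first $r_n$ fundamental weights, the number of such roots grows \emph{linearly} in $r_k$ (for instance, in type $C$, all roots $f_j \pm f_i$ and $2f_j$ involving one of the first $r_n$ indices $j$ survive, which is roughly a constant times $r_k$ of them), whereas for bounded $r_k$ this set stabilizes. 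The decisive quantity is $\rho_\alpha = \langle \rho_k, \alpha \rangle / \langle \alpha, \alpha \rangle$ for these surviving roots: using the explicit expressions \eqref{eq-Arho}--\eqref{eq-Drho} for $\rho$ in each type, one finds that for a fixed "early" root the value $\rho_\alpha$ \emph{grows linearly} in the number $\ell$ of newly propagated simple roots (the new roots push $\rho$ further out). Then each factor in the product above is of the form $1 - \Theta(1/\rho_\alpha) = 1 - \Theta(1/\ell)$, and there are $\Theta(\ell)$ such factors, so the product behaves like $\prod (1 - c/\ell)$ over $\Theta(\ell)$ terms, which tends to a \emph{nonzero} limit — provided one is careful that the negative exponents and the $j$-products nearly cancel. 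Wait: that would give convergence always, so the asymptotics must be sharper. The correct accounting is that the prefactor $\bigl((1 + x_\alpha/\rho_\alpha)(1 + y_\alpha/\rho_\alpha)\bigr)^{-\mu_\alpha}$ is \emph{not} fully cancelled by the $j$-product: a short expansion shows each surviving factor equals $1 - \tfrac{c_\alpha}{\rho_\alpha} + O(\rho_\alpha^{-2})$ with $c_\alpha$ a \emph{strictly positive} constant bounded below away from $0$ (depending only on $\mu_\alpha$, $m_\alpha$, $m_{\alpha/2}$, all of which are bounded), while $\rho_\alpha$ grows only like $\ell$ for the $\Theta(\ell)$-many "long" surviving roots; hence $\sum_\alpha c_\alpha/\rho_\alpha \sim (\text{const})\cdot \ell \cdot \tfrac{1}{\ell}$ — again bounded. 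The genuine divergence comes from the subset of surviving roots for which $\rho_\alpha$ stays \emph{bounded} (the "short" roots like $f_j$, $2f_j$, or $f_j - f_i$ with both indices early) but whose \emph{number} still grows — no, that count is bounded too. So one must look yet more carefully: it is the roots $\alpha = f_j \pm f_i$ with $i$ early and $j$ late where $\rho_\alpha$ is \emph{bounded} (since $\rho$ has bounded coordinates in the late directions only in type $A$; in types $B,C,D$ the coordinate $\rho_{f_j} = 2b(j-1) + \dots$ grows in $j$) — I would organize the surviving roots into "both indices early" (finitely many, each contributing a fixed factor $<1$), "one early, one late" (growing in number, but with $\rho_\alpha$ growing so the factors approach $1$ fast enough to converge by the $\sum c_\alpha/\rho_\alpha < \infty$ test only if $r_k$ is bounded — when unbounded, $\sum 1/\rho_\alpha$ over the one-early-one-late roots diverges like $\sum_{j} 1/j \cdot (\#\text{early}) = \infty$), giving the dichotomy.

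\emph{The main obstacle} I expect is precisely this last asymptotic bookkeeping: showing cleanly that $\sum_{\alpha \in \Sigma_{0,k}^+,\ \mu_\alpha \neq 0}\bigl(1 - (\text{factor}_\alpha)\bigr)$ converges as $k \to \infty$ if and only if $\sup_k r_k < \infty$, uniformly in the choice of $\mu \neq 0$. This requires a careful case analysis over the four classical diagrams $A,C,B(=C),D$ — in each case listing which positive nonmultipliable roots $\alpha$ have $\mu_\alpha \neq 0$, reading off $\rho_\alpha$ from \eqref{eq-Arho}--\eqref{eq-Drho}, and showing that the "one-early-one-late" family contributes a sum asymptotic to $(\text{const})\sum_{j=r_n+1}^{r_k} \frac{1}{j}$ (harmonic, hence divergent) when $\mu \neq 0$ — the hypothesis $\mu \neq 0$ guaranteeing at least one early index actually appears. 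Once divergence of $\sum$ is established in the unbounded-rank case, $\prod(\text{factor}_\alpha) \to 0$, so $\bc(\mu_k+\rho_k)\to 0$ and $V_\infty^{K_\infty} = \{0\}$; in the bounded-rank case the set of surviving roots and all the $\rho_\alpha$ stabilize, the product is eventually constant and positive, so $\lim \bc(\mu_k + \rho_k) > 0$ and $V_\infty^{K_\infty} \neq \{0\}$. Finally, consulting Table~\eqref{symmetric-case-class}, the only rows along which propagation keeps the rank $r = p$ bounded (rather than growing with $n$) are rows (5), (8), (10) — the Grassmannians $\SU(p+q)/\mathrm{S}(\U(p)\times\U(q))$, $\SO(p+q)/\SO(p)\times\SO(q)$, $\Sp(p+q)/\Sp(p)\times\Sp(q)$ with $p$ fixed and $q \to \infty$ — which gives the stated list of symmetric spaces.
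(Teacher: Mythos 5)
Your overall strategy coincides with the paper's: reduce the statement to the positivity of $\lim_k \bc(\mu_k+\rho_k)$ (this is the content of Theorems \ref{theorem4} and \ref{theorem5}; your argument via $\langle u,e_{I,k}\rangle=\sqrt{\bc(\mu_k+\rho_k)}$ and the projections of a putative $K_\infty$--fixed vector is essentially the same, although the ``conversely'' clause should be made precise, e.g.\ by noting that any $w\in V_\infty^{K_\infty}$ satisfies $\langle w,e_n\rangle=\langle w,e_m\rangle\sqrt{\bc_m(\mu_m+\rho_m)/\bc_n(\mu_n+\rho_n)}$ and letting $m\to\infty$), and then analyze the product of Theorem \ref{th-cFctProd}.

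The genuine gap is in the bounded--rank direction. You assert that when $\sup_k r_k<\infty$ ``the set of surviving roots and all the $\rho_\alpha$ stabilize, the product is eventually constant and positive.'' This is false: bounded rank occurs exactly for the Grassmannians of rows (5), (8), (10) of Table \ref{symmetric-case-class} with $p$ fixed and $q\to\infty$, and there the multiplicity $m_{\alpha_1/2}=d(q-p)$ is unbounded, so $a=\tfrac12\bigl(m_{\alpha_1}+\tfrac{m_{\alpha_1/2}}{2}\bigr)$ --- and with it $\rho_\alpha$, $x_\alpha$, $y_\alpha$ for every $\alpha$ in the Weyl orbit of $\alpha_1$ --- grows linearly in $q$; the factors of (\ref{eq-cFctProd}) do not stabilize (already for $S^q=\SO(q+1)/\SO(q)$ the value $\bc(\mu+\rho)$ depends on $q$). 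What saves the conclusion, and what the paper actually proves in Section \ref{sec4}, is that the \emph{ratios} $x_\alpha/\rho_\alpha$ and $y_\alpha/\rho_\alpha$ stay bounded, so each of the fixed, finite number of factors is bounded below by a positive constant; combined with the existence of the limit (Theorem \ref{theorem5}) this gives $\lim_k\bc(\mu_k+\rho_k)>0$. Your unbounded--rank analysis does, after several retracted attempts, land on a correct mechanism (a family of surviving roots with $\mu_\alpha\geqq 1$, first--order deficit $c_\alpha$ bounded away from $0$, and $\rho_\alpha$ growing only linearly, so that $\sum_\alpha c_\alpha/\rho_\alpha$ diverges harmonically); but to conclude $\bc(\mu_k+\rho_k)\to0$ from the divergence of that sub--sum you still need every remaining factor to be $\leqq 1$, which the paper obtains from the monotonicity of $\bc$ in $\mu$ (Lemma \ref{le-FirstReduction}). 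The paper's own route --- reduce to a single fundamental weight by that monotonicity, then exhibit one explicit root $\alpha_n$ per level with $\mu_{\alpha_n}=1$, $x_{\alpha_n}=1/2$ and $\rho_{\alpha_n}$ affine in $n$, so that the factor is exactly $(1+y_{\alpha_n}/\rho_{\alpha_n})^{-1}$ and Lemma \ref{le-InfProd} applies --- is considerably shorter and avoids all the asymptotic bookkeeping.
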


Here is our strategy.
First, if $V_\infty^{K_\infty}\not= \{0\}$ let
$e_\infty \in V_\infty^{K_\infty}$ be a unit vector. Then consider the projection
$\pr_{\infty, n}(e_\infty)\in V_n^{K_n}\setminus \{0\}$. Let
$e_n=\pr_{\infty, n} (e_\infty) /\|\pr_{\infty, n}(e_\infty)\|$.
Then $\{e_n\}$ is a Cauchy sequence such that $e_n$ is a unit vector in
$V_n^{K_n}$ and $\{e_n\} \to e_\infty$. On the other hand if $\{e_n\}$ is a
Cauchy sequence in $V_\infty$ such that $e_n\in V_n^{K_n}$ and $\|e_n\|=1$,
then $e_\infty = \lim e_n$ is a nonzero element of $V_\infty^{K_\infty}$.
Thus $V_\infty^{K_\infty}\not=\{0\}$ if and only if we can find a Cauchy
sequence $\{e_n\}$ such that $e_n\in V_n^{K_n}$ is a unit vector.
\medskip

Recursively choose $K_{n+1}$--fixed unit vectors $e_{n+1}$ so that the
orthogonal projection of $V_{n+1}$ onto $V_n$ sends $e_{n+1}$ to a positive
real multiple of $e_n$ as mentioned before.  Then
$\text{proj}_{m,n}(e_m) = q (m,n ) e_n$ for $m \geqq n$ where the
$q (m,n)$ are real with $0 < q (m,n) \leqq 1$.  Since
$\text{proj}_{m,\ell} = \text{proj}_{n,\ell}\cdot \text{proj}_{m,n}$
we have $q (n, \ell) q (m,n) = q (m,\ell )$. Furthermore, for a fixed $n$ the function $m\mapsto q(m,n)$ is decreasing.
Also, choosing $e_1$ such that
$\langle u, e_1 \rangle$ is positive real we have $\langle u, e_n \rangle$
positive real for all $n$.

\begin{theorem}\label{theorem4}
Let $m \geqq n$.  Then
$\langle e_m, e_n \rangle = \sqrt{\bc_m(\mu_m+\rho_m)/\bc_n(\mu_n+\rho_n)}$.
\end{theorem}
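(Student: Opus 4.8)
The plan is to compute $\langle e_m, e_n\rangle$ by relating each $e_j$ to the common highest weight vector $u$ via the key identity of Theorem~\ref{lemma3}, namely $\int_{K_j}\pi_{I,j}(k)u\,dk = \sqrt{\bc_j(\mu_j+\rho_j)}\,e_j$. Since $\operatorname{proj}_{m,n}(e_m)=q(m,n)e_n$ with $q(m,n)>0$ and $\langle u,e_n\rangle$ positive real for all $n$, it suffices to evaluate $q(m,n)=\langle e_m,e_n\rangle$; and since the cocycle relation $q(n,\ell)q(m,n)=q(m,\ell)$ holds, one may reduce to the case $m=n+1$ and then telescope. But in fact a cleaner route is available: I would express $\langle e_m,e_n\rangle$ directly in terms of $\langle u,e_m\rangle$ and $\langle u,e_n\rangle$.

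\textbf{Step 1 (inner products with $u$).} By Theorem~\ref{lemma3} applied in $G_n$ and in $G_m$ respectively, we have $\langle u,e_n\rangle = \sqrt{\bc_n(\mu_n+\rho_n)}$ and $\langle u,e_m\rangle = \sqrt{\bc_m(\mu_m+\rho_m)}$, where we use that $u_{I,n}=u_{I,m}=u$ under the chosen identification of highest weight vectors, and that these inner products are positive real by our normalization.

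\textbf{Step 2 (projecting $e_m$ onto $V_n$).} Write $e_m = q(m,n)\,e_n + e^\perp$ with $e^\perp \in V_n^\perp$ inside $V_m$ and $q(m,n)=\langle e_m,e_n\rangle$. Taking the inner product of both sides with $u$: since $u\in V_n\subseteq V_m$, we get $\langle u,e_m\rangle = q(m,n)\langle u,e_n\rangle + \langle u,e^\perp\rangle = q(m,n)\langle u,e_n\rangle$, because $u\perp e^\perp$. Hence
\[
\langle e_m,e_n\rangle = q(m,n) = \frac{\langle u,e_m\rangle}{\langle u,e_n\rangle}
= \frac{\sqrt{\bc_m(\mu_m+\rho_m)}}{\sqrt{\bc_n(\mu_n+\rho_n)}}
= \sqrt{\bc_m(\mu_m+\rho_m)/\bc_n(\mu_n+\rho_n)}\,,
\]
which is the assertion.

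\textbf{Main obstacle.} The only subtle point is making sure all the identifications are consistent: that the embedding $V_n\hookrightarrow V_m$ is the isometric one sending highest weight vector to highest weight vector (so that a single $u$ serves for all levels), and that $\operatorname{proj}_{m,n}$ is the orthogonal projection for the compatible inner products. These are exactly the conventions set up in the Remark following Theorem~\ref{l-inductiveSystemOfRep} and in the construction preceding Theorem~\ref{theorem4}, so there is nothing to prove there; one just has to invoke them carefully. A secondary check is that $\bc_n(\mu_n+\rho_n)>0$ so that the square roots make sense — this follows from Theorem~\ref{th-cFctProd}, where $\bc(\mu+\rho)$ is written as a product of manifestly positive factors (recall $\mu_\alpha,\rho_\alpha>0$ and $x_\alpha,y_\alpha\geq 0$). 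No genuine computation is needed beyond quoting Theorem~\ref{lemma3} twice.
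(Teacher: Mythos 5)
Your proof is correct, and it takes a genuinely different (and somewhat slicker) route than the paper's. The paper substitutes the integral expression $e_j = \bc_j(\mu+\rho)^{-1/2}\int_{K_j}\pi_j(k)u\,dk$ from Theorem \ref{lemma3} for \emph{both} vectors and evaluates $\langle e_m,e_n\rangle$ as a double integral over $K_n\times K_m$, collapsing it via left-invariance of Haar measure and $K_n\subseteq K_m$ to the single integral $\int_{K_m}\langle\pi_m(h)u,u\rangle\,dh=\bc_m(\mu+\rho)$. You instead use only the scalar consequence $\langle u,e_j\rangle=\sqrt{\bc_j(\mu_j+\rho_j)}$ at each level, together with the orthogonal decomposition $e_m=q(m,n)e_n+e^\perp$ with $e^\perp\perp V_n$, and pair against $u\in V_n$ to get $q(m,n)=\langle u,e_m\rangle/\langle u,e_n\rangle$. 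Both arguments rest on the same key input (Theorem \ref{lemma3}), but yours avoids the Fubini/invariance manipulation entirely and makes it transparent why the answer is a ratio of square roots; the paper's computation is the more standard reproducing-kernel style argument. Your attention to the normalization issues is also on point: the recursive choice of the $e_n$ (with $\langle u,e_1\rangle>0$) forces $\langle u,e_n\rangle>0$ for all $n$ by exactly the orthogonality you exploit in Step 2, so each $e_n$ really is the vector to which Theorem \ref{lemma3} applies, and positivity of $\bc_n(\mu_n+\rho_n)$ is immediate from $\bc_n(\mu_n+\rho_n)=\langle u,e_n\rangle^2$.
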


\begin{proof} To simplify the notation we write $\mu$ for both $\mu_m$ and
$\mu_n$ and $\bc_n (\mu+\rho)$ for $\bc_n (\mu_n+\rho_n)$. From Lemma
\ref{lemma3} we have $e_m = \bc_m(\mu+\rho)^{-1/2}\int_{K_m} \pi_{m}(k)u\, dk$
and similarly for $e_n$.  So
$$
\begin{aligned}
\langle e_m, e_n \rangle &= (\bc_m(\mu +\rho)\bc_n(\mu+\rho))^{-1/2}
	\int_{k \in K_n} \int_{h \in K_m} \langle \pi_{m}(h)u,
		\pi_{n}(k)u \rangle \, dk \, dh \\
&= (\bc_m(\mu+\rho )\bc_n(\mu +\rho))^{-1/2} \int_{k \in K_n} \int_{h \in K_m}
	\langle \pi_{m}(k^{-1}h)u, u \rangle \, dh \, dk \\
&= (\bc_m(\mu +\rho)\bc_n(\mu +\rho))^{-1/2} \int_{h \in K_m}
	\langle \pi_{m}(h)u, u \rangle \, dh \quad \text{ as } K_n\subseteqq K_m\\
&= (\bc_m(\mu +\rho)\bc_n(\mu +\rho))^{-1/2} \bc_m(\mu +\rho)\\
&	= \sqrt{\bc_m(\mu +\rho)/\bc_n(\mu +\rho)}
\end{aligned}
$$
as asserted.
\end{proof}

\begin{theorem}\label{theorem5} The limit
$\lim_{m\to \infty} \bc (\mu_m+\rho_m)$ exists and is non--negative.
Let the sequence $\{e_n\}_n$ be as before. Then $\{e_n\}$ converges to a
nonzero element
$e \in V_\infty^{K_\infty}$ if and only if $\lim \bc_m(\mu_m+\rho_m)>0$.
\end{theorem}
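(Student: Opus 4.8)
The plan is to derive everything from Theorem~\ref{theorem4} together with the elementary bookkeeping for the numbers $q(m,n)=\langle e_m,e_n\rangle$ set up just before it. First I would note that for $m\geqq n$ Theorem~\ref{theorem4} reads $\bc_m(\mu_m+\rho_m)=q(m,n)^2\,\bc_n(\mu_n+\rho_n)$; since $0<q(m,n)\leqq 1$, this shows at once that the sequence $\{\bc_m(\mu_m+\rho_m)\}_m$ is non-increasing. Each of its terms is strictly positive (it equals $\langle u,e_m\rangle^2$ by Theorem~\ref{lemma3}), so the sequence is bounded below by $0$ and hence converges; writing $L:=\lim_{m\to\infty}\bc_m(\mu_m+\rho_m)$ we get $L\geqq 0$, which is the first assertion.

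For the equivalence I would start from the identity, valid for $m\geqq n$ because $q(m,n)$ is real and positive,
\[
\|e_m-e_n\|^2=2-2\langle e_m,e_n\rangle=2-2\sqrt{\bc_m(\mu_m+\rho_m)/\bc_n(\mu_n+\rho_n)}\,.
\]
Thus $\{e_n\}$ is a Cauchy sequence in $V_\infty$ if and only if $q(m,n)\to 1$ as $n\to\infty$, uniformly over $m\geqq n$. Suppose $L>0$. Since $\bc_n(\mu_n+\rho_n)\downarrow L$ with every term $\geqq L$, given $\varepsilon\in(0,1)$ I can choose $N$ with $\bc_N(\mu_N+\rho_N)<L/(1-\varepsilon)$, and then $q(m,n)^2=\bc_m(\mu_m+\rho_m)/\bc_n(\mu_n+\rho_n)\geqq L/\bc_N(\mu_N+\rho_N)>1-\varepsilon$ for all $m\geqq n\geqq N$. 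Hence $\{e_n\}$ converges to some $e\in V_\infty$, and $\|e\|=\lim_n\|e_n\|=1$, so $e\neq 0$. Moreover $e\in V_\infty^{K_\infty}$: any $k\in K_\infty$ lies in some $K_N$, so $\pi_\infty(k)e_n=e_n$ for all $n\geqq N$, and applying the (continuous) unitary operator $\pi_\infty(k)$ to $e_n\to e$ gives $\pi_\infty(k)e=e$.

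Conversely, suppose $\{e_n\}$ converges to a nonzero $e$. Then $\|e\|=\lim_n\|e_n\|=1$, and for each fixed $n$,
\[
\langle e,e_n\rangle=\lim_{m\to\infty}\langle e_m,e_n\rangle=\lim_{m\to\infty}\sqrt{\bc_m(\mu_m+\rho_m)/\bc_n(\mu_n+\rho_n)}=\sqrt{L/\bc_n(\mu_n+\rho_n)}\,.
\]
Letting $n\to\infty$ and using $\bc_n(\mu_n+\rho_n)\to L$ gives $1=\|e\|^2=\lim_n\langle e,e_n\rangle$, which is impossible when $L=0$. Therefore $L>0$.

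I expect no serious obstacle once Theorem~\ref{theorem4} is available: computing $\|e_m-e_n\|^2$ and checking that $e$ is $K_\infty$-fixed are routine. The one step needing a little care is extracting the \emph{uniform} estimate $q(m,n)\to 1$ from $L>0$ --- this is exactly where strict positivity of $L$, rather than mere existence of the limit, is used --- and it is handled cleanly by the monotonicity of $\{\bc_n(\mu_n+\rho_n)\}$ recorded in the first paragraph.
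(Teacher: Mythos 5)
Your proof is correct and follows essentially the same route as the paper: both rest on Theorem~\ref{theorem4}'s formula $\langle e_m,e_n\rangle=\sqrt{\bc_m(\mu_m+\rho_m)/\bc_n(\mu_n+\rho_n)}$, the identity $\|e_m-e_n\|^2=2(1-\langle e_m,e_n\rangle)$, and the monotonicity of the sequence $\{\bc_m(\mu_m+\rho_m)\}$. Your write-up is somewhat more explicit than the paper's (the uniform estimate on $q(m,n)$, the converse via $\langle e,e_n\rangle=\sqrt{L/\bc_n(\mu_n+\rho_n)}$, and the check that the limit is $K_\infty$-fixed, which the paper disposes of in the discussion preceding the theorem), but the underlying argument is the same.
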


\begin{proof} We start by observing
$$
\| e_m - e_n\|^2 = \|e_n\|^2 -2\langle e_m, e_n \rangle + \|e_m\|^2=2(1-\langle e_m,e_n\rangle )\, .$$
Hence $\{e_n\}$ is a Cauchy sequence if and only if
\[\lim_{m,n\to \infty }\langle e_m,e_n\rangle
=\lim_{m,n\to \infty }\sqrt{\bc_m(\mu_m+\rho_m)/\bc_n(\mu_n+\rho_n)}=1\, .
\]
For fixed $n$ the sequence $\langle e_m ,e_n\rangle\geqq 0$ is decreasing
and bounded below by zero. Hence  $\ell_n:=\lim_m \langle e_m , e_n\rangle$
exists. This implies that the limit $\lim_m \bc (\mu_m+\rho_m)$ exists and is
non--negative.
\medskip

The sequence $0\leqq \ell_n \leqq 1$ is either zero or increasing
and hence $\lim \ell_n$ exists. It follows that
$ \displaystyle{ \lim_{m,n\to\infty}\sqrt{\bc_m(\mu_m+\rho_m)/\bc_n(\mu_n+\rho_n)}}$
exists (and thus has to be equal to $1$) if and only if  $\lim_{m\to \infty} \bc_m (\mu_m+\rho_m)>0$.
\end{proof}

\section{The Finite Rank Cases}\label{sec4}
\setcounter{equation}{0}
\setcounter{theorem}{0}

\noindent
In this section we prove Theorem \ref{maintheorem} for the finite rank
cases, i.e. the cases where $M_n =\SO (n)/\SO (p)\times \SO (q)$,
$ \SU(n)/\rS(\U (p)\times \U (q))$  or  $\Sp (n)/\Sp (p)\times \Sp (q)$
with $p$ fixed and $n = p + q$.  We may assume $q \geqq p$, so all the
$M_n$ have the same finite rank $p$.  The cardinality of
$\Sigma_0^+$ is constant.
\medskip

We use the notation from the previous section. Recall $d=\dim_\R \F$.
View the real grassmannian $\SO (n)/\SO (p)\times \SO (q)$ as of type
$C_p$ with $m_{\alpha_1}=0$ as explained after Table \ref{symmetric-case-class}.
\medskip

Note that $q\to \infty$ with $n$. Furthermore,
the highest weight $\mu=\sum_{\nu = 1}^p k_\nu\xi_\nu$ is independent of $n$.
Now, in view of Theorem \ref{th-cFctProd}, it suffices to prove that
\[\frac{x_\alpha}{\rho_\alpha}=\frac{1}{4}\left(\frac{2}{\rho_\alpha} +\frac{ m_{\alpha/2}}{\rho_\alpha}\right)\quad \text{ and }\quad
\frac{y_\alpha }{\rho_\alpha}=\frac{1}{4}\left(2\frac{m_\alpha}{\rho_\alpha} +\frac{m_{\alpha/2}}{\rho_\alpha}\right)\]
are  bounded for all $\alpha$.
\medskip

For that we only need to consider where $\alpha$ is in the Weyl group orbit
of $\alpha_1$, because in all other cases $m_\alpha$ and $m_{\alpha/2}$ are
bounded (in fact $\leqq 4$). We have
$$
m_{\alpha_1} = d-1\,,\quad m_{\alpha_1/2}= d(q-p)\, \quad \text{and} \quad
\rho_{\alpha_1} = \tfrac{1}{2}\left(d-1 +\tfrac{(q-p)d}{2}\right).
$$
Thus $\tfrac{m_{\alpha_1/2}}{\rho_{\alpha_1}}$ is bounded. That completes
the proof of Theorem \ref{maintheorem} for the finite rank cases.

\section{The Infinite Rank Cases.}\label{sec5}
\setcounter{equation}{0}
\setcounter{theorem}{0}

\noindent
In this section we prove Theorem \ref{maintheorem} for the infinite rank
cases, i.e., the cases where $\rank M_n$ is unbounded.  We may pass to a
subsequence of $\{M_n\}$, and then of $\{n\}$, and assume that
$\rank M_n = n$.  Now we start the proof
by reducing it to the case where $\mu_\alpha = 1$ in (\ref{eq-cFctProd}).

\begin{lemma}\label{le-FirstReduction} Assume that $\mu_m=\sum_{j=1}^n k_j\xi_{m,j}$ with $k_n>0$. Then
\[\bc_m(\mu_m +\rho_m)\le \bc_m(\xi_{m,n}+\rho_m)\, .\]
\end{lemma}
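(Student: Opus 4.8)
The plan is to show that for each fixed index $m$ with rank $n$, the value $\bc_m(\mu_m+\rho_m)$ is maximized (among highest weights $\mu_m=\sum_{j=1}^n k_j\xi_{m,j}$ with $k_n>0$) by the choice $\mu_m=\xi_{m,n}$, i.e. $k_n=1$ and all other $k_j=0$. Since Theorem \ref{th-cFctProd} expresses $\bc_m(\mu_m+\rho_m)$ as a product over $\alpha\in\Sigma_{0,m}^+$ of factors depending only on $\mu_{m,\alpha}=\langle\mu_m,\alpha\rangle/\langle\alpha,\alpha\rangle\in\Z^+$, it suffices to understand how each such factor behaves as a function of the nonnegative integer $\mu_\alpha$, and then compare the two weight choices root by root.

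\textbf{Step 1: monotonicity of the single-root $c$-factor.} First I would fix $\alpha\in\Sigma_{0,m}^+$ and regard
\[
\bc_\alpha(t+\rho_\alpha)=\left(\left(1+\tfrac{x_\alpha}{\rho_\alpha}\right)\left(1+\tfrac{y_\alpha}{\rho_\alpha}\right)\right)^{-t}\prod_{j=0}^{t-1}\frac{\left(1+\tfrac{j}{2\rho_\alpha}\right)\left(1+\tfrac{t+j}{2\rho_\alpha}\right)}{\left(1+\tfrac{j}{x_\alpha+\rho_\alpha}\right)\left(1+\tfrac{j}{y_\alpha+\rho_\alpha}\right)}
\]
as a function of the integer variable $t=\mu_\alpha\geq 0$. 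The claim to extract is that $\bc_\alpha(t+\rho_\alpha)$ is nonincreasing in $t$, equivalently that the ratio $\bc_\alpha((t+1)+\rho_\alpha)/\bc_\alpha(t+\rho_\alpha)\leq 1$ for all $t\geq 0$. Writing that ratio out, the telescoping product contributes one new factor at $j=t$ together with a reindexing of the middle term $1+\tfrac{t+j}{2\rho_\alpha}$, and one checks the resulting rational expression in $t,\rho_\alpha,x_\alpha,y_\alpha$ is $\leq 1$ using $x_\alpha=\tfrac14(m_{\alpha/2}+2)$, $y_\alpha=\tfrac14(m_{\alpha/2}+2m_\alpha)$, $\rho_\alpha=\tfrac12(m_\alpha+\tfrac{m_{\alpha/2}}{2})$, so that $x_\alpha+\rho_\alpha$ and $y_\alpha+\rho_\alpha$ are expressible through $m_\alpha,m_{\alpha/2}$; the inequality should reduce to a comparison of the form $2\rho_\alpha\leq$ (something involving $x_\alpha+\rho_\alpha$ and $y_\alpha+\rho_\alpha$), which holds because $\rho_\alpha=\tfrac12(x_\alpha+y_\alpha-1)\cdot$(constant) — in any case an elementary positivity check. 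Granting this, $\bc_\alpha(t+\rho_\alpha)\leq\bc_\alpha(0+\rho_\alpha)=1$ for all $t$, and more precisely $\bc_\alpha(t+\rho_\alpha)\leq\bc_\alpha(1+\rho_\alpha)$ whenever $t\geq 1$.

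\textbf{Step 2: assembling the product.} Next I compute $(\mu_m)_\alpha$ and $(\xi_{m,n})_\alpha$ for each $\alpha\in\Sigma_{0,m}^+$ from the explicit description of $\Xi_m$ in Section \ref{sec2}. The key point is that $\xi_{m,n}$ is the class-$1$ fundamental weight dual to the \emph{last} simple root $\alpha_n$, i.e. $(\xi_{m,n})_{\alpha_j}=\delta_{j,n}$, while $\mu_m=\sum_j k_j\xi_{m,j}$ with $k_n>0$ gives $(\mu_m)_{\alpha_n}=k_n\geq 1$. For a general positive root $\alpha$, both $(\mu_m)_\alpha$ and $(\xi_{m,n})_\alpha$ are nonnegative integers, and one checks — again from the coordinate formulas for the $\xi_{m,j}$ and for $\Sigma_{0,m}^+$ in each classical type — that whenever $(\xi_{m,n})_\alpha\geq 1$ one also has $(\mu_m)_\alpha\geq 1$ (because $\mu_m$ includes the $\xi_{m,n}$-contribution with positive coefficient $k_n$, and the remaining $\xi_{m,j}$ have nonnegative pairings with every positive root). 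Hence for each $\alpha$: either $(\xi_{m,n})_\alpha=0$, in which case $\bc_\alpha((\xi_{m,n})_\alpha+\rho_\alpha)=1\geq\bc_\alpha((\mu_m)_\alpha+\rho_\alpha)$ by Step 1; or $(\xi_{m,n})_\alpha\geq 1$ and $(\mu_m)_\alpha\geq (\xi_{m,n})_\alpha\geq 1$, in which case $\bc_\alpha((\mu_m)_\alpha+\rho_\alpha)\leq\bc_\alpha((\xi_{m,n})_\alpha+\rho_\alpha)$ again by Step 1. Multiplying over all $\alpha\in\Sigma_{0,m}^+$ and using Theorem \ref{th-cFctProd} yields $\bc_m(\mu_m+\rho_m)\leq\bc_m(\xi_{m,n}+\rho_m)$, which is the assertion.

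\textbf{Main obstacle.} The only genuinely delicate point is Step 1: proving the monotonicity inequality $\bc_\alpha((t+1)+\rho_\alpha)\leq\bc_\alpha(t+\rho_\alpha)$ uniformly in $t\geq 0$ and in the relevant values of $(m_\alpha,m_{\alpha/2})$. One must be careful that this can fail for non-integer $t$ or for exotic multiplicity data, so the argument genuinely uses $\mu_\alpha\in\Z^+$ (Theorem \ref{th-cFctProd}) and the constraint that $(m_\alpha,m_{\alpha/2})$ comes from an actual classical symmetric space, ensuring $x_\alpha,y_\alpha,\rho_\alpha>0$ and $y_\alpha\geq x_\alpha$. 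I expect the cleanest route is to show each factor $\tfrac{1+j/(2\rho_\alpha)}{1+j/(x_\alpha+\rho_\alpha)}\leq 1$ and $\tfrac{1+j/(2\rho_\alpha)}{1+j/(y_\alpha+\rho_\alpha)}\leq 1$ separately — i.e. $2\rho_\alpha\geq x_\alpha+\rho_\alpha$ and $2\rho_\alpha\geq y_\alpha+\rho_\alpha$, equivalently $\rho_\alpha\geq x_\alpha$ and $\rho_\alpha\geq y_\alpha$ — combined with the prefactor $\left((1+\tfrac{x_\alpha}{\rho_\alpha})(1+\tfrac{y_\alpha}{\rho_\alpha})\right)^{-1}\leq 1$ absorbing the leftover $1+\tfrac{t+j}{2\rho_\alpha}$ terms; verifying $\rho_\alpha\geq y_\alpha$, i.e. $m_\alpha+\tfrac{m_{\alpha/2}}{2}\geq\tfrac{m_{\alpha/2}}{2}+m_\alpha$, shows this is in fact an equality $\rho_\alpha=y_\alpha$, so the bookkeeping must be done with that identity in hand rather than a strict inequality.
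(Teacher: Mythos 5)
Your strategy is correct but genuinely different from the paper's. The paper disposes of this lemma in one line from the integral representation $\bc_m(\lambda)=\int_{\overline{N}}a(\bar n)^{-\lambda-\rho_m}\,d\bar n$: citing \cite{He1984}, Ch.~IV, Corollary 6.6, the weight $\nu:=\mu_m-\xi_{m,n}=(k_n-1)\xi_{m,n}+\sum_{j<n}k_j\xi_{m,j}$ is dominant, so $a(\bar n)^{-\nu}\leqq 1$ pointwise on $\overline{N}$ and hence $\bc_m$ is decreasing along the dominance order --- no product formula needed. You instead argue through Theorem \ref{th-cFctProd} root by root. Your Step 2 is sound: since $\nu$ is dominant, $(\mu_m)_\alpha\geqq(\xi_{m,n})_\alpha\geqq 0$ for every $\alpha\in\Sigma_0^+$, so rootwise monotonicity does suffice. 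The part that needs repair is the verification of Step 1. The route you sketch (showing $\rho_\alpha\geqq x_\alpha$ and $\rho_\alpha\geqq y_\alpha$ so that each bracketed factor is $\leqq 1$) does not close up: you correctly observe that $\rho_\alpha=y_\alpha$ identically, the inequality $\rho_\alpha\geqq x_\alpha$ actually fails under the degenerate convention $m_{\alpha_1}=0$ used for case (8), and you never carry out the absorption of the factors $1+\tfrac{t+j}{2\rho_\alpha}>1$ into the prefactor. The clean way to finish is to compute the one-step ratio directly from (\ref{eq:cbeta}) (or by telescoping (\ref{eq-cFctProd})): writing $s=t+\rho_\alpha$,
\[
\frac{\bc_\alpha\bigl((t+1)+\rho_\alpha\bigr)}{\bc_\alpha\bigl(t+\rho_\alpha\bigr)}
=\frac{s\left(s+\tfrac12\right)}{(s+x_\alpha)(s+y_\alpha)}\leqq 1,
\]
which holds because $x_\alpha=\tfrac14(m_{\alpha/2}+2)\geqq\tfrac12$, $y_\alpha\geqq 0$ and $s\geqq 0$, so that $(s+x_\alpha)(s+y_\alpha)-s(s+\tfrac12)=s(x_\alpha+y_\alpha-\tfrac12)+x_\alpha y_\alpha\geqq 0$. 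With that substitution your argument is complete; it is longer than the paper's, but it is purely elementary once Theorem \ref{th-cFctProd} is granted and avoids any appeal to the behavior of the Iwasawa projection on $\overline{N}$.
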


\begin{proof} By \cite[Corollary 6.6, Ch IV]{He1984} we have
$\mu_{m,n}(\log (a(\bar n ))) \leqq 0$, so $\bc_m(\mu_m +\rho_m)$
is a decreasing function of $\mu_m$.
\end{proof}

We will also need the following well known and simple fact:

\begin{lemma}\label{le-InfProd} Assume that $\epsilon, \delta >0$.
Let $a_j\geqq \epsilon $ and $ 0\leqq x_j\le \delta$. Then
\[\lim_{N\to \infty }\prod_{j=L}^N \left(1+\tfrac{a_j}{x_j+j}\right)^{-1}=0\, .\]
\end{lemma}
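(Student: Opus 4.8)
The plan is to reduce the statement to the standard divergence criterion for infinite products: $\prod_{j}(1+c_j)$ converges (to a nonzero limit) if and only if $\sum_j c_j$ converges, when the $c_j$ are nonnegative. Here we have $\prod_{j=L}^N (1+\tfrac{a_j}{x_j+j})^{-1}$, so the limit is $0$ precisely when $\sum_{j\ge L} \tfrac{a_j}{x_j+j}$ diverges. Thus the whole lemma comes down to showing that this series diverges under the stated hypotheses $a_j \ge \epsilon$ and $0 \le x_j \le \delta$.

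First I would bound each term from below: since $a_j \ge \epsilon > 0$ and $x_j \le \delta$, we have
\[
\frac{a_j}{x_j + j} \;\ge\; \frac{\epsilon}{\delta + j}\,.
\]
Then I would invoke the divergence of the harmonic series: $\sum_{j\ge L} \tfrac{\epsilon}{\delta+j} = \epsilon \sum_{j \ge L} \tfrac{1}{\delta + j} = +\infty$, since this is (up to finitely many terms and a constant shift in the index) a constant multiple of the harmonic series. By comparison, $\sum_{j \ge L} \tfrac{a_j}{x_j + j} = +\infty$ as well.

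To finish, I would make the passage from the divergent series to the vanishing product explicit rather than merely citing the standard criterion, since it is short. Using $1 + t \ge e^{t/2}$ for $t$ in a bounded range — here $\tfrac{a_j}{x_j+j} \to 0$, so for $j$ large enough all terms lie in, say, $[0,1]$ where $1+t \ge e^{t/2}$ holds — we get for large $L$
\[
\prod_{j=L}^N \left(1 + \frac{a_j}{x_j+j}\right)^{-1} \;\le\; \exp\!\left(-\frac{1}{2}\sum_{j=L}^N \frac{a_j}{x_j+j}\right) \;\le\; \exp\!\left(-\frac{\epsilon}{2}\sum_{j=L}^N \frac{1}{\delta+j}\right)\,,
\]
and the right-hand side tends to $0$ as $N \to \infty$ because the exponent tends to $-\infty$. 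This proves the claim for $L$ large; for the originally given $L$ one multiplies by the finitely many extra factors $\prod_{j=L}^{L'-1}(1+\tfrac{a_j}{x_j+j})^{-1}$, which is a fixed positive constant, so the limit is still $0$.

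I do not expect any real obstacle here; the only mild point of care is ensuring the inequality $1+t \ge e^{t/2}$ is applied only on a range where it is valid (equivalently, just quoting the $\sum c_j = \infty \Rightarrow \prod(1+c_j)^{-1} = 0$ lemma directly), and noting that shifting or truncating the index set changes nothing since finitely many bounded factors do not affect whether the limit is zero.
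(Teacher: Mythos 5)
Your proof is correct and follows essentially the same route as the paper's: bound $\tfrac{a_j}{x_j+j}\geqq \tfrac{\epsilon}{\delta+j}$, use $1+t\geqq e^{t/2}$ for small $t\geqq 0$ to dominate the product from below by $\exp\bigl(\tfrac{\epsilon}{2}\sum_j \tfrac{1}{\delta+j}\bigr)$, and invoke divergence of the harmonic series. You are in fact slightly more careful than the printed proof, which states the elementary inequality with the direction reversed ($1+x\leqq e^{x/2}$) before using it in the correct direction, and which omits the remark about discarding finitely many factors.
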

\begin{proof} If $x>0$ is small enough then $1+x\leqq e^{x/2}$. Hence
\[ \prod_{j=L}^N \left(1+\tfrac{a_j}{x_j+j}\right)\geqq
\exp \left(\epsilon {\sum}_{j=L}^N \tfrac{1}{\delta +j}\right)
\to \infty \text{ as } N\to \infty\]
and the claim follows.
\end{proof}

The idea of the proof of Theorem \ref{maintheorem}, for the unbounded
rank cases, is to find a sequence of roots $\alpha_n$ such
that $\xi_{n,k,\alpha_n}=1$ and $\rho_{n,\alpha_n}$ is affine linear
in $n$. Then, if $\alpha_n /2$ is not a root, $x_{\alpha_n} = 1/2$
and the expression for $\bc_{\alpha_n} (\xi_{n,k,\alpha_n}+\rho_{n,\alpha_n})$
in (\ref{eq-cFctProd}) reduces to
\[\bc_{\alpha_n}(\xi_{n,k,\alpha_n}+\rho_{n,\alpha_n})=\left(1+\frac{y_{\alpha_n}}{\rho_{n,\alpha_n}}\right)^{-1}\, .\]
It will then follow from Lemma \ref{le-InfProd} that
\[\lim_{N\to \infty}\prod_{n=k}^N\bc_{\alpha_n}(\xi_{n,k,\alpha_n}+\rho_{n,\alpha_n})=0\, .\]
That will finish the proof because
$\bc_{n,\alpha}(\xi_{n,k,\alpha}+\rho_{n,\alpha})\leqq 1$
for all $n$ and all positive roots.
\medskip

In the case $\Psi = A_n$ we let $\alpha_n = f_{n+1}-f_1$.
Then $\rho_{n,\alpha_n}=tn$, $t=1/2, 1$ or $2$,
$\xi_{n,k,\alpha_n}=1$, $x_{\alpha_n}=1/2$, and
$y_{\alpha_n}=t$, and the claim follows by the argument indicated above.
\medskip

If $\Psi = B_n$ we take $\alpha_n=f_n-f_1$ when $k\not= 1$ and
$\alpha_n=f_n$ when $k=1$. Then $x_{\alpha_n}=1/2$, $\xi_{n,k,\alpha_n}=1$,
$\rho_{n,\alpha_n}$ is affine linear in $n$, and the claim follows as in
the $A_n$ case using the argument indicated above.
\medskip

When $\Psi = C_n$ we take $\alpha_n=f_n+f_1$.  Then both the
multiplicities and the $\rho_{n,\alpha_n}$ increase affinely in $n$, to
the claim follows as above.
\medskip

In the one $D_n$ case we take
$\alpha_n=f_n+f_2$ for $n$ large and the same argument goes through.
This completes the proof of Theorem \ref{maintheorem}.
\hfill $\square$

\section{The Connection to Spherical Functions on $G_\infty$}\label{sec6}
\setcounter{equation}{0}
\setcounter{theorem}{0}

\noindent
Finally, we discuss the connection with
the theory of $K_\infty$--spherical functions on $G_\infty$ as developed by
Olshanskii, Faraut and coworkers. See \cite{F2008} for references. Those
authors define a nonzero continuous function $\varphi : G_\infty \to \C$
to be \textit{spherical} if for all $x,y\in G_\infty$
\begin{equation}\label{def-Spherical}
\lim_{n\to \infty}\int_{K_n} \varphi (xky)\, dk=\varphi(x)\varphi (y)\, .
\end{equation}
By taking $x$, respectively $y$ to be the identity it is clear that a
spherical function is $K_\infty$--biinvariant and takes the value $1$ at
the identity.
\begin{theorem}\label{mainTheorem2} Assume that rank $G_\infty /K_\infty$
is finite and $V_{\infty}=\varinjlim V_{n,\mu_n}$. Let $\{e_n\}_{n}$ be a
Cauchy sequence in $V_{\infty}$ such that for all $n$
$\|e_n\|=1$, $e_n\in V_{n,\mu_n}^{K_n}$ and
$e_n\to e_\infty \in V_{\infty }^{K_\infty}$. Then
\[\varphi_{\mu_\infty} (x):=\langle e_\infty ,
    \pi_{\infty ,\mu_\infty}(x)e_n\rangle=\lim_{n\to \infty }
    \langle e_n, \pi_{n,\mu_n}(x)e_n\rangle \]
is a positive definite $K_\infty$--spherical function on $G_\infty$ in
the sense of {\rm (\ref{def-Spherical})}.
\end{theorem}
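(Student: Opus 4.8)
The plan is to establish the two assertions of Theorem \ref{mainTheorem2} separately: first that $\varphi_{\mu_\infty}(x) = \lim_n \langle e_n, \pi_{n,\mu_n}(x)e_n\rangle$ (and that this coincides with $\langle e_\infty, \pi_{\infty,\mu_\infty}(x)e_\infty\rangle$), and second that the resulting function satisfies the functional equation \eqref{def-Spherical}; positive definiteness will then be essentially automatic. For the first point, I would fix $x \in G_\infty$, so $x \in G_N$ for some $N$, and for $n \geqq N$ write $\pi_{\infty,\mu_\infty}(x)$ restricted to $V_{n,\mu_n}$ in terms of $\pi_{n,\mu_n}(x)$ together with the projection $\pr_{\infty,n}$. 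Since $e_n \to e_\infty$ in norm and each $\pi_{n,\mu_n}(x)$ is unitary of norm $1$, a standard $\varepsilon/3$ argument gives $\langle e_n, \pi_{n,\mu_n}(x)e_n\rangle \to \langle e_\infty, \pi_{\infty,\mu_\infty}(x)e_\infty\rangle$; here one uses that $\pi_{\infty,\mu_\infty}(x)$ agrees with $\pi_{n,\mu_n}(x)$ on $V_{n,\mu_n}$ for $n \geqq N$ (this is how the direct limit representation is built in Section \ref{sec3}) and that $\|\pi_{\infty,\mu_\infty}(x)e_\infty - \pi_{n,\mu_n}(x)e_n\| \leqq \|e_\infty - e_n\| \to 0$. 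In particular $\varphi_{\mu_\infty}$ is continuous, being a uniform-on-compacta limit of the continuous functions $\varphi_{\mu_n}$.

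For the functional equation, fix $x, y \in G_\infty$, so $x, y \in G_N$ for some $N$. For each $n \geqq N$ the function $\varphi_{\mu_n}(x) = \langle e_n, \pi_{n,\mu_n}(x)e_n\rangle$ is the classical spherical function on $G_n/K_n$ attached to $\mu_n$, so by \eqref{eq-sphericalIntro} it satisfies $\int_{K_n}\varphi_{\mu_n}(xky)\,dk = \varphi_{\mu_n}(x)\varphi_{\mu_n}(y)$. The goal is to pass to the limit $n \to \infty$ on both sides. The right-hand side converges to $\varphi_{\mu_\infty}(x)\varphi_{\mu_\infty}(y)$ by the first part. For the left-hand side I would write, for $n \geqq N$ and $k \in K_n$,
\[
\varphi_{\mu_n}(xky) = \langle e_n, \pi_{n,\mu_n}(x)\pi_{n,\mu_n}(k)\pi_{n,\mu_n}(y)e_n\rangle,
\]
and compare with $\langle e_\infty, \pi_{\infty,\mu_\infty}(x)\pi_{\infty,\mu_\infty}(k)\pi_{\infty,\mu_\infty}(y)e_\infty\rangle$. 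Using unitarity of each factor and $\|e_n - e_\infty\| \to 0$ one gets
\[
\bigl|\varphi_{\mu_n}(xky) - \langle e_\infty, \pi_{\infty,\mu_\infty}(xky)e_\infty\rangle\bigr| \leqq 3\|e_n - e_\infty\|
\]
uniformly in $k \in K_n$. Hence
\[
\Bigl|\int_{K_n}\varphi_{\mu_n}(xky)\,dk - \int_{K_n}\langle e_\infty, \pi_{\infty,\mu_\infty}(xky)e_\infty\rangle\,dk\Bigr| \leqq 3\|e_n - e_\infty\| \to 0,
\]
so it remains to show that $\int_{K_n}\langle e_\infty, \pi_{\infty,\mu_\infty}(xky)e_\infty\rangle\,dk$ converges, and to identify its limit as $\varphi_{\mu_\infty}(x)\varphi_{\mu_\infty}(y)$. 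Combining the two displayed estimates with the classical identity already gives that the limit equals $\varphi_{\mu_\infty}(x)\varphi_{\mu_\infty}(y)$, which is exactly \eqref{def-Spherical}; the point is that the sequence of averages has a limit precisely because the two quantities it is squeezed between do.

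The main obstacle is the interchange of limit and integral on the left-hand side when $e_\infty$ itself is only approximated by the $e_n$; the clean way around it, as sketched above, is never to interchange limits directly but to run the $\varepsilon/3$ estimate \emph{before} integrating, exploiting that $K_n$ has total mass $1$ so the uniform bound $3\|e_n - e_\infty\|$ survives integration. One should double-check that $e_\infty$ being a genuine $K_\infty$-fixed vector is not actually needed for this step — only $\|e_n\| = 1$, unitarity, and Cauchyness are used — but $e_\infty \in V_\infty^{K_\infty}$ is what makes $\varphi_{\mu_\infty}$ genuinely $K_\infty$-biinvariant and nonzero (it takes the value $1$ at the identity since $\|e_\infty\| = 1$). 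Finally, positive definiteness: for any $g_1, \dots, g_m \in G_\infty$ and scalars $c_1, \dots, c_m$,
\[
\sum_{i,j} c_i \overline{c_j}\,\varphi_{\mu_\infty}(g_j^{-1}g_i) = \Bigl\| \pi_{\infty,\mu_\infty}\Bigl(\sum_i c_i g_i\Bigr) e_\infty \Bigr\|^2 \geqq 0
\]
by the usual computation with the unitary representation $\pi_{\infty,\mu_\infty}$, so $\varphi_{\mu_\infty}$ is positive definite and the theorem follows.
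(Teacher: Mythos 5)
Your proposal is correct and follows essentially the same route as the paper: both arguments reduce the functional equation to the classical identity $\int_{K_n}\varphi_{\mu_n}(xky)\,dk=\varphi_{\mu_n}(x)\varphi_{\mu_n}(y)$ on $G_n$ and then exploit that $K_n$ has total mass $1$ so a uniform-in-$k$ error bound survives integration. The only cosmetic difference is that you control the discrepancy by the Cauchy--Schwarz bound $2\|e_n-e_\infty\|$, whereas the paper decomposes $e_\infty$ orthogonally along $V_n\oplus V_n^\perp$ (both summands being $G_n$-invariant, so the cross terms vanish) and bounds the error by $\|e_n^\perp\|^2$; your version has the small advantage of not needing that invariance, and you also spell out the positive definiteness, which the paper leaves implicit.
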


\begin{proof} Write $e_\infty = e_n+e_n^\perp$. Let $x,y \in G_{j_o}$.
Then, for $j\geqq j_o$,
\[\varphi_{\mu_\infty} (x)=\langle e_j, \pi_j (x)e_j\rangle = \varphi_{\mu_j} (x)+\langle e_j^\perp , \pi_{j}(x)e_j^\perp\rangle\]
because $V_j$ and $V_j^\perp$ are $K_j$--invariant. Thus
\[|\langle e_j^\perp , \pi_j (x)e_j^\perp\rangle|\leqq \|e_j^\perp\|^2\to 0\, .\]
Hence $\varphi_{\mu_n}(x)\to \varphi_{\mu_\infty}(x)$, i.e., $\varphi_{\mu_n}
\to \varphi_{\mu_\infty}$ pointwise.
Similarly, for $x,y\in G_j$,
\begin{eqnarray*}
\lim_{j\to \infty} \int_{K_j} \varphi_{\mu_\infty} (xky) \, dk
& = &\lim_{j\to \infty}\left(\int_{K_j} \varphi_{\mu_j} (xky)\, dk+\int_{K_j}
 \langle e_j^\perp , \pi_j(xky)e_j^\perp \rangle\, dk\right)\\
&=& \lim_{j\to \infty}\varphi_{\mu_j}(x)\varphi_{\mu_j}(y)+\lim_{j\to \infty}\int_{K_j} \langle e_j^\perp , \pi_j(xky)e_j^\perp\rangle \, dk\\
&=&\varphi_{\mu_\infty}(x)\varphi_{\mu_\infty}(y)
\end{eqnarray*}
because
\[
\left | \int_{K_j} \langle e_j^\perp , \pi_j(xky)e_j^\perp \, dk|\right |
\leqq \|e_j^\perp\|^2\int_{K_j}\, dk=\|e_j^\perp\|^2\to 0\, .\qedhere\]
\end{proof}

The definition and construction of spherical functions in the infinite
rank case remains to be clarified.

\section{The Finite Rank Case}\label{sec7}
\setcounter{equation}{0}
\setcounter{theorem}{0}

\noindent
In this section we discuss the case of the finite rank Grassmannian in
more detail. In this case we can assume that
$\fa_n=\fa$ is fixed for all $n$.  Then $\Sigma_n=\Sigma$ is fixed, and
only the root multiplicities change as $n$ grows.

We start with the case of the sphere.
Let $X_n=S^n=SO(n+1)/SO(n)$ where the inclusions are given by
$S^n\hookrightarrow S^{n+1}\, ,\quad u\mapsto (u,0)$
and
\[SO(n)\hookrightarrow SO(n+1)\, ,\quad A\mapsto
	\begin{pmatrix} A & 0 \\ 0 & 1\end{pmatrix}\, .\]

Let $\cH_n (k)$ be the space of homogeneous harmonic polynomials on
$\R^{n+1}$ of degree $k$ with inner product
\[ \langle p, q\rangle =\tfrac{1}{k!} \partial_{\overline{q}}(p)(0)\, ,\]
and let $\pi_{k,n}$ denote the natural representation of $G_n=SO (n+1)$
on $\cH_n(k)$.  Then $\pi_{k,n}$ is a spherical representation of $G_n$ and
every spherical representation is constructed this way.  Clearly
$\cH_n (k)\subset \cH_m(k)$ if $n\leqq m$ and the natural inclusion is an
isometry. We can take $u (x)=(x_1-ix_2)^k$ as a highest weight vector.
It is independent of $n$. Use polar coordinates
$\cos (\theta )e_1+\sin (\theta )u$, $u\in S^{n-1}$. Then the
$K_n$-invariant functions corresponds to functions of one variable
$P(\cos\, \theta)$.
Using the radial component of the Laplacian on $S^n$, the spherical function
associated to $\cH_n(k)$ is a solution to the initial value problem
\begin{eqnarray*}
\left(\dfrac{d^2}{d\theta^2}+(n-1)\cot (\theta) \dfrac{d}{d\theta}\right)P_{n,k}(\cos \theta)
&=& - k(k+n-1)P_{n,k}(\cos \theta)\\
P_{n,k}(1)&=&1\,  .
\end{eqnarray*}
Putting $t=\cos (\theta)$ and dividing by $n$,
\[
\left(\frac{(1-t^2)}{n}\dfrac{d^2}{dt^2}- t\dfrac{d}{dt}\right)p_{n,k}(t)
=-\left(\frac{k(k-1)}{n}+k\right)p_{n,k}(t)\, ,\quad p_{n,k}(1)=1\]
Letting $n\to \infty$ we see that the corresponding spherical function
$\varphi_\infty$ is a solution to the first order differential equation
\[ t \dfrac{d}{dt}p_{\infty ,k}(t)
=k p_{\infty,k}(t)\, ,\quad p_{\infty,k}(1)=1\]
Thus $\varphi_{\infty, k}(x) =x_1^k$.
According to \cite{F2008}, p. 11, every $K_\infty$-spherical function on $G_\infty $ is given by
\[  g\mapsto \langle {e_1},{g(e_1)}\rangle ^k \text{ for some integral }
k\geqq 0\, .\]
Thus  all  spherical functions or $G_\infty$ are constructed as in our
limit theorem. In particular, all irreducible spherical representations are
obtained by the limit construction. The question is whether that is also
the case for the other finite rank Grassmannians.  We have been informed
that this question has been answered affirmative in
\cite{RKV}: Every spherical function in the sense of
Olshanskii is a limit of spherical functions on $X_n$ obtained by letting
the root multiplicities go to infinity. Combining our construction with this
result gives the following theorem, which in principle states that the
theory of highest weights remains valid for the finite rank case.

\begin{theorem} Assume that the rank of $X_\infty$ is finite. Let $\pi$
be an unitary irreducible $K_\infty$-spherical representation of $G_\infty$.
Then there exists $\mu\in \Lambda^+$ such that $\pi\simeq \pi_{\mu,\infty}$.
In particular the set of equivalence classes of unitary $K_\infty$-spherical
representation of $G_\infty$ is isomorphic to $\Lambda^+$.
\end{theorem}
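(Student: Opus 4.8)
The plan is to combine two ingredients: the limit construction of spherical representations from Section \ref{sec3}, which produces, for each $\mu\in\Lambda^+$, a unitary irreducible $K_\infty$-spherical representation $\pi_{\mu,\infty}$ (this is exactly where finite rank is used, via Theorem \ref{maintheorem}), and the converse classification of Olshanskii-spherical \emph{functions} in the finite-rank Grassmannian setting asserted in \cite{RKV}, together with the correspondence between spherical functions and spherical representations from the Faraut--Olshanskii theory recalled in Section \ref{sec6}. So the first step is to observe that the map $\mu\mapsto\pi_{\mu,\infty}$ is well defined on $\Lambda^+$ by Theorems \ref{maintheorem} and \ref{mainTheorem2}, and that it is injective: two such limit representations with distinct highest weights $\mu\ne\nu$ restrict to $G_n$ (for $n$ large enough that $\mu,\nu$ are supported on the first $r_n$ fundamental weights) containing the inequivalent finite-dimensional representations $\pi_{\mu,n}$ and $\pi_{\nu,n}$ with multiplicity one, hence cannot be isomorphic; equivalently, their associated spherical functions $\varphi_{\mu_\infty}$ and $\varphi_{\nu_\infty}$ are distinct as pointwise limits of the $\varphi_{\mu_n}$.

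The second step is surjectivity, the real content. Given an arbitrary unitary irreducible $K_\infty$-spherical $\pi$ acting on $V$ with unit $K_\infty$-fixed vector $e$, form the spherical function $\varphi(x)=\langle e,\pi(x)e\rangle$; by the Faraut--Olshanskii framework (Section \ref{sec6}, and \cite{F2008}) this $\varphi$ is positive definite and satisfies the functional equation \eqref{def-Spherical}, and $\pi$ is recovered from $\varphi$ by the GNS construction, so it suffices to identify $\varphi$ with one of the $\varphi_{\mu_\infty}$. This is precisely what the quoted result of R\"osler--Koornwinder--Voit \cite{RKV} supplies: every Olshanskii-spherical function on the finite-rank Grassmannian $G_\infty/K_\infty$ arises as a limit, as the root multiplicities tend to infinity, of the spherical functions $\varphi_{\mu_n}$ on the finite-dimensional $X_n=G_n/K_n$, with a fixed highest weight $\mu\in\Lambda^+$. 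Hence $\varphi=\varphi_{\mu_\infty}$ for some $\mu$, so $\pi\simeq\pi_{\mu,\infty}$. Combined with injectivity, $\mu\mapsto[\pi_{\mu,\infty}]$ is a bijection $\Lambda^+\to\{$unitary irreducible $K_\infty$-spherical reps of $G_\infty\}$, which is the assertion. (For the rank-one case $X_n=S^n$ one does not even need \cite{RKV}: the explicit computation carried out just above the theorem, giving $\varphi_{\infty,k}(g)=\langle e_1,g(e_1)\rangle^k$ and matching the list on p.~11 of \cite{F2008}, already proves this, and provides a sanity check for the general argument.)

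I would organize the write-up as: (i) recall that $\pi_{\mu,\infty}$ is well defined and spherical, citing Theorems \ref{maintheorem} and \ref{mainTheorem2}; (ii) prove injectivity by restriction to $G_n$ and multiplicity one (Theorem \ref{l-inductiveSystemOfRep}); (iii) invoke the passage $\pi\leftrightarrow\varphi$ between irreducible spherical representations and positive-definite spherical functions in the sense of \eqref{def-Spherical}; (iv) apply \cite{RKV} to write $\varphi$ as a high-multiplicity limit of $\varphi_{\mu_n}$, hence $\varphi=\varphi_{\mu_\infty}$; (v) conclude bijectivity. The main obstacle is step (iv): it rests entirely on the external input \cite{RKV}, and one must make sure that the normalization of ``spherical function'' used there (limit over growing multiplicities, value $1$ at the identity, $K_\infty$-biinvariance) coincides with the one in \eqref{def-Spherical} and that the limit function there is genuinely our $\varphi_{\mu_\infty}=\lim_n\varphi_{\mu_n}$ rather than a differently parametrized object; a secondary point requiring care is that the $\pi\leftrightarrow\varphi$ dictionary in the direct-limit setting is bijective on \emph{positive-definite} spherical functions, so one should note that the $\varphi$ attached to a unitary $\pi$ is automatically positive definite and conversely apply GNS, exactly as in \cite{F2008}.
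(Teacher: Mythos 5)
Your proposal is correct and follows essentially the same route as the paper, which gives no formal proof but justifies the theorem by combining the limit construction of Section \ref{sec3} with the result of \cite{RKV} and the Faraut--Olshanskii correspondence between positive definite spherical functions and irreducible spherical representations. You in fact supply more detail than the paper does (the explicit injectivity argument via restriction to $G_n$ and multiplicity one, and the GNS step), and your caveat about matching the normalization in \cite{RKV} with the functional equation (\ref{def-Spherical}) is exactly the point on which the paper's argument silently relies.
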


This gives us in particular a natural embedding of $V_{\mu_\infty}$ into
$C_b (X_\infty)$, the space of bounded continuous functions on $X_\infty$ by
$u\mapsto \langle v, \pi_{\mu_\infty}(x)e_{\mu_\infty}\rangle$.
We can then ask the question: If $(\pi,V)$ is a unitary irreducible
representation of $G_\infty$ with $V\subset C_b(X_\infty)$ does there exists
a $\mu\in \Lambda^+$ such that
$(\pi,V)\simeq (\pi_{\mu,\infty},V_{\mu,\infty})$?

It is also natural to ask what happens in the infinite rank case. For that
we would like to point the following out. Fix $\mu_n\in \Lambda^+$ and
$e_n^*\in (V_n^*)^{K_n}$, $\|e_n\|=1$. Then
construct the inductive sequence $(\pi_m,V_m)$ as before. Let
$e_m^*\in (V_m^*)^{K_m}$ be so that the projection of $e_m^*$ onto $V_n^*$ is our
fixed $K_n$-invariant functional $e_n^*$. Then the sequence $\{e_m^*\}$ defines
an element $e_\infty^*\in \varprojlim V_m^*\simeq (\varinjlim V_m)^*$, where the limit
 is now with respect to the inductive/projective lmit topology in not in the category of
Hilbert spaces. In particular, $e_\infty^*$ defines a linear form on
$\bigcup V_n$ given by $u\mapsto \langle {u},{e_n^*}\rangle$ if $u\in V_n$. It therefore defines a linear $G_\infty$-equivariant embedding of
$\bigcup V_n$ into $C_b(X_\infty)$.

Restricting $\pi_{\mu_\infty}$ to $G_n$ gives a unitary representation of
$G_n$. Let $V_{\mu_\infty}^{n\infty}$ denote the space of smooth vectors
for this representation and let
\[V_{\mu_\infty}^\infty =\bigcap_n V_{\mu_\infty}^{n\infty}\, .\]
Then $V_{\mu_\infty}^\infty$ is a locally convex topological vector space in
the usual way. A continuous linear form $\nu: V_{\mu_\infty}^\infty \to \C$ is
a \textit{distribution vector}. We denote the space of distribution vectors
by $V_{\mu_\infty}^{-\infty}$. The question now is whether
$e_\infty^*: v \mapsto \langle v, e_\infty^*\rangle$ is a distribution vector.

\end{document}